\newcommand{\infw}[1]{\mathbf{#1}}
\newcommand{\N}{\mathbb{N}}
\newcommand{\Z}{\mathbb{Z}}
\newcommand{\eps}{\varepsilon}
\DeclareMathOperator{\rep}{rep}
\DeclareMathOperator{\ab}{ab}
\DeclareMathOperator{\add}{add}
\def\modd#1 #2{#1\ \mbox{\rm (mod}\ #2\mbox{\rm )}}
\theoremstyle{plain}
\newtheorem{theorem}{Theorem}
\newtheorem{lemma}[theorem]{Lemma}
\newtheorem{corollary}[theorem]{Corollary}
\newtheorem{proposition}[theorem]{Proposition}
\newtheorem{conjecture}[theorem]{Conjecture}
\theoremstyle{definition}
\newtheorem{definition}[theorem]{Definition}
\newtheorem{remark}[theorem]{Remark}
\newtheorem{example}[theorem]{Example}
\newtheorem{problem}[theorem]{Problem}
\newtheorem{question}[theorem]{Question}
\numberwithin{equation}{section}
\title{Additive word complexity and \texttt{Walnut}}
 \author{Pierre Popoli}
\address{Department of Mathematics, University of Li\`ege, Belgium}
\email{pierre.popoli@uliege.be}
\author{Jeffrey Shalliit}
\address{School of Computer Science, University of Waterloo, Canada}
\email{shallit@waterloo.ca}
 \author{Manon Stipulanti}
\address{Department of Mathematics, University of Li\`ege, Belgium}
\email{m.stipulanti@uliege.be}
\begin{document}

\begin{abstract}
In combinatorics on words, a classical topic of study is the number of specific patterns appearing in infinite sequences.
For instance, many works have been dedicated to studying the so-called factor complexity of infinite sequences, which gives the number of different factors (contiguous subblocks of their symbols), as well as abelian complexity, which counts factors up to a permutation of letters.
In this paper, we consider the relatively unexplored concept of additive complexity, which counts the number of factors up to additive equivalence.
We say that two words are additively equivalent if they have the same length and the total weight of their letters is equal.
Our contribution is to expand the general knowledge of additive complexity from a theoretical point of view and  consider various famous examples.
We show a particular case of an analog of the long-standing conjecture on the regularity of the abelian complexity of an automatic sequence.
In particular, we use the formalism of logic, and the software \verb|Walnut|, to decide related properties of automatic sequences.
We compare the behaviors of additive and abelian complexities, and we also consider the notion of abelian and additive powers.
Along the way, we present some open questions and conjectures for future work.
\end{abstract}

\maketitle

\bigskip
\hrule
\bigskip

\noindent 2010 {\it Mathematics Subject Classification}: 68R15

\noindent \emph{Keywords: Combinatorics on words, Abelian complexity, Additive complexity, Automatic sequences, Walnut software}

\bigskip
\hrule
\bigskip

\section{Introduction}

Combinatorics on words is the study of finite and infinite sequences, also known as \emph{streams} or \emph{strings} in other theoretical contexts.
Although it is rooted in the work of Axel Thue, who was the first to study regularities in infinite words in the early 1900's, words became a systematic topic of combinatorial study in the second half of the 20th century~\cite{BerstelPerrin2007}.
Since then, many different approaches have been developed to analyze words from various points of view.
One of them is the celebrated \emph{factor} or \emph{subword complexity function}: given an infinite word $\infw{x}$ and a length $n\ge 0$, we compute the size of  $\mathcal{L}_n(\infw{x})$,  which contains all length-$n$ contiguous subblocks of $\infw{x}$,  also called \emph{factors} or \emph{subwords} in the literature.
One of the most famous theorems in combinatorics on words related to the factor complexity function is due to Morse and Hedlund in 1940~\cite{MorseHedlund1940}, where they obtained a characterization of ultimately periodic words.
As a consequence of this result, combinatorists defined binary aperiodic infinite words having the smallest possible factor complexity function, the so-called \emph{Sturmian words}.

Many other complexity functions have been defined on words depending on the properties combinatorists wanted to emphasize; see, for instance, the non-exhaustive list in the introduction of~\cite{ACSS2024+}.
As the literature on the topic is quite large, we only cite the so-called \emph{abelian complexity function}. Instead of counting all distinct factors, we count them up to abelian equivalence: two words $u$ and $v$ are \emph{abelian equivalent}, written $u \sim_{\ab} v$, if they are permutations of each other.
For instance, in English, \texttt{own}, \texttt{now}, and \texttt{won} are all abelian equivalent. 
For an infinite word $\infw{x}$, we let $\rho^{\ab}_{\infw{x}}$ denote its abelian complexity function.
In this paper, we study yet another equivalence relation on words; namely, {\it additive equivalence}.
Roughly, two words are additively equivalent if the total weight of their letters is equal.
 In the following,  for a word $w\in\Sigma^*$,  we let $|w|$ denote its \emph{length}, i.e.,  the number of letters it is composed of.  Furthermore,  for each letter $a\in\Sigma$, we let $|w|_a$ denote the number of $a$'s in $w$.

\begin{definition}
    Fix an integer $\ell\ge 1$ and the alphabet $\Sigma=\{0,1,\ldots,\ell\}$.
    Two words $u,v\in\Sigma^*$ are \emph{additively equivalent} if $|u|=|v|$ and $\sum_{i=0}^\ell i |u|_i = \sum_{i=0}^\ell i |v|_i$, which we write as $u\sim_{\add} v$.
\end{definition}

\begin{example}
    Over the three-letter alphabet $\{0,1,2\}$, we have $020\sim_{\add} 101$.
\end{example}

\begin{definition}
Fix an integer $\ell\ge 1$ and the alphabet $\Sigma=\{0,1,\ldots,\ell\}$. 
Let $\infw{x}$ be an infinite word on $\Sigma$.
    The \emph{additive complexity} of $\infw{x}$ is the function $\rho^{\add}_{\infw{x}} \colon \N \to \N, n\mapsto \#(\mathcal{L}_n(\infw{x})/{\sim_{\add}})$, i.e., length-$n$ factors of $\infw{x}$ are counted up to additive equivalence.
\end{definition}

Surprisingly, not so many results are known for additive equivalence and the corresponding complexity function, in contrast with the abundance of abelian results in combinatorics; see~\cite{RSZ-2010,RSZ-2011,Shallit-2021,Turek-2013,Turek-2015}, for example. Notice that over a two-letter alphabet, the concepts of abelian and additive complexity coincide.
Additive complexity was first introduced in~\cite{ABJS-2012}, where the main result states that bounded additive complexity implies that the underlying infinite word contains an additive $k$-power for every $k$ (an \emph{additive $k$-power} is a word $w$ that can be written as $x_1 x_2 \cdots x_k$ where the words $x_1,x_2,\ldots, x_k$ are all additively equivalent).

%; i.e., $x_1 \sim_{\add} x_2 \sim_{\add} \cdots \sim_{\add} x_k$).

Later, words with bounded additive complexity were studied: first in~\cite{ABJS-2012}, and with more attention in~\cite{Banero-2013}.
In particular, equivalent properties of bounded additive complexity were found.
We also mention work on the particular case of constant additive and abelian complexities~\cite{Banero-2013,Currie-Rampersad-2011,RSZ-2011,Sahasrabudhe-2015}.
 As already observed, combinatorists expanded the notion of pattern avoidance to additive powers.
See the most recent preprint~\cite{AndradeMol-2024} for a nice exposition of the history.
For instance, Cassaigne et al.~\cite{CCSS2014} proved that the fixed point of the morphism $0\mapsto 03, 1\mapsto 43, 3\mapsto 1, 4\mapsto 01$ avoids additive cubes (see~\cref{sec:preliminaries} for concepts not defined in this introduction).
Rao~\cite{Rao2015} proved that it is possible to avoid additive cubes over a ternary alphabet and mentioned that the question about additive squares (in one dimension  and over the integers) is still open. 
Furthermore, we mention the following related papers.
Brown and Freedman \cite{Brown&Freedman:1987} also talked about the open problem about additive squares. 
{A notion of ``close'' additive squares is defined in~\cite{Brown-2012}.}
In~\cite{Halbeisen&Hungerbuhler:2000}, the authors showed that in every infinite word over a finite set of non-negative integers there is always a sequence of factors (not necessarily of the same length) having the same sum.
In~\cite{Pirillo&Varricchio:1994}, the more general setting of $k$-power modulo a morphism was studied.
Finally, in terms of computing the additive complexity of specific infinite words, to our knowledge, only that of a fixed point of a Thue--Morse-like morphism is known~\cite{CWW-2019}. 

In this paper, we expand our general knowledge of additive complexity functions of infinite words.
After giving some preliminaries in~\cref{sec:preliminaries}, we obtain several general results in~\cref{sec:general results}, and in~\cref{thm:effective procedure for addcompl} we prove a particular case of the conjecture below.
 Note that it is itself a particular case of the long-standing similar conjecture in an abelian context: the abelian complexity of a $k$-automatic sequence is a $k$-regular sequence~\cite{ParreauRigoRowlandVandomme-2015}.

\begin{conjecture}
\label{conj:ab and add kregular}
 The additive complexity of a $k$-automatic sequence is a $k$-regular sequence.
\end{conjecture}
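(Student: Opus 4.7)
The plan is to reduce additive equivalence to an equality of partial sums of $\infw{x}$, and then to express and count these sums in the logical framework underlying \verb|Walnut|.

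First, I would introduce the summatory function $S(n) := \sum_{j=0}^{n-1} \infw{x}[j]$, so that the weight of the factor $\infw{x}[i..i+n-1]$ equals $W(i,n) = S(i+n) - S(i)$. Two length-$n$ factors starting at $i$ and $j$ are then additively equivalent if and only if $W(i,n) = W(j,n)$, which gives
\[
\rho^{\add}_{\infw{x}}(n) \;=\; \#\{\, W(i,n) : i\geq 0\,\}.
\]
Thus the problem reduces to counting, at each fixed $n$, the image of the two-variable function built from the summatory sequence $S$.

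Next, invoking the classical Allouche--Shallit theorem, the sequence $S$ is $k$-regular. The key step is to upgrade this to a \emph{synchronized} representation: to exhibit a finite automaton reading $(i,n,v)_k$ in parallel that recognizes the triples with $W(i,n)=v$. With synchronization in hand, the predicate ``the factors at positions $i$ and $j$ of length $n$ have equal weight'' becomes a first-order formula in the theory supported by \verb|Walnut|. One can then invoke \verb|Walnut|'s counting facility, which converts such a definable cardinality into a $k$-regular sequence, to obtain $\rho^{\add}_{\infw{x}}$ as a $k$-regular sequence---at least in the particular case where this synchronization can be established.

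The main obstacle is precisely that $S$ (and hence $W$) is in general only $k$-regular, not $k$-synchronized: since $S(n)$ can grow linearly in $n$, no automaton on $(n,v)_k$ need exist. Favorable particular cases include (i) sequences whose additive complexity is bounded, so that only finitely many weight values ever arise and synchronization becomes trivial, and (ii) famous automatic sequences (for instance Thue--Morse-like examples) for which the weight automaton can be built explicitly, either by hand or by running \verb|Walnut| on the defining morphism. Pushing beyond these cases to the full conjecture would require a framework accommodating quantification over values of a genuinely unbounded $k$-regular function while still computing image cardinalities---the same obstruction that keeps the abelian analog of the conjecture open.
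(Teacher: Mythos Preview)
The statement you are addressing is stated as a \emph{conjecture} in the paper and is left open; the paper does not prove it. What the paper does establish is a particular case (its theorem on effective computation of additive complexity): if the additive complexity of an automatic sequence $\infw{x}$ is bounded \emph{and} the Parikh vectors of its length-$n$ prefixes form a synchronized sequence, then $\rho^{\add}_{\infw{x}}$ is automatic, hence regular. Your write-up is not a proof either, and you explicitly flag the same obstruction, so your overall assessment and the paper's coincide.

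Your strategy and the paper's partial result are essentially the same idea in different packaging. You work with the scalar summatory function $S(n)=\sum_{j<n}\infw{x}[j]$ and the weight $W(i,n)=S(i+n)-S(i)$; the paper works with the full Parikh vector of prefixes and then projects via a ``weighted Parikh vector''. Both reductions lead to the same predicate $W(i,n)=W(j,n)$ and the same need to count its equivalence classes, and the paper's synchronization hypothesis on prefix Parikh vectors is exactly your ``upgrade $S$ to a synchronized representation''.

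One point in your case~(i) should be tightened. Bounded additive complexity means that for each fixed $n$ the \emph{number} of distinct values $W(i,n)$ is bounded; it does not mean that only finitely many weight values occur overall (indeed $W(0,n)$ grows roughly linearly in $n$), and it does not by itself make synchronization ``trivial''. The paper still needs, as a separate hypothesis, that the prefix Parikh vectors be synchronized in order to express the differences $\Delta(i,n)=W(i,n)-W(0,n)$ in first-order logic. What boundedness then buys is that $\Delta(i,n)$ ranges over a fixed finite set of integers, so one can enumerate the possibilities and assemble a DFAO. Your diagnosis of the general obstruction---that $S$ is $k$-regular but not in general $k$-synchronized---is precisely why the full conjecture, like its abelian analogue, remains open in the paper.
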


In particular, the proof of our~\cref{thm:effective procedure for addcompl} relies on the logical approach to combinatorics on words: indeed, many properties of words can be phrased in first-order logic.
Based on this, Mousavi~\cite{Walnut1} designed the free software
\verb|Walnut| that allows one to automatically decide the truth of assertions about many properties for a large family of words.
See~\cite{Walnut2} for the formalism of the software and a survey of the combinatorial properties that can be decided.
In~\cref{sec:general results}, we show how \verb|Walnut| may be used in an ad-hoc way to give partial answers to~\cref{conj:ab and add kregular}.
In~\cref{sec:different behaviors}, we compare the behaviors of the additive and abelian complexity functions of various words.
We highlight the fact that they may behave quite differently, sometimes making use of \verb|Walnut|.
Motivated by the various behaviors we observe, we study in~\cref{sec:equality between add and ab} some words for which the additive and abelian complexity functions are in fact equal.
We end the paper by considering the related notions of abelian and additive powers.

\section{Preliminaries}
\label{sec:preliminaries}

For a general reference on words, we guide the reader to~\cite{Loth97}.
An \emph{alphabet} is a finite set of elements called \emph{letters}.
A \emph{word} over an alphabet $\Sigma$ is a finite or infinite sequence of letters from $\Sigma$. The \emph{length} of a finite word $w$, denoted $|w|$, is the number of letters it is made of.
The \emph{empty word} is the only $0$-length word, denoted by $\eps$.
For all $n\ge 0$, we let $\Sigma^n$ denote the 
set of all length-$n$ words over $\Sigma$.
We let $\Sigma^*$ denote the set of finite words over $A$, including the empty word, and equipped with the concatenation.
In this paper, we distinguish finite and infinite words by writing the latter in bold.
For each letter $a\in\Sigma$ and a word $w\in\Sigma^*$, we let $|w|_a$ denote the number of $a$'s in $w$.
Let us assume that the alphabet $\Sigma=\{a_1<\cdots<a_k\}$ is ordered.
For a word $w\in\Sigma^*$, we let $\Psi(w)$ denote the \emph{abelianization} or \emph{Parikh vector} $(|w|_{a_1},\ldots,|w|_{a_k})$, which counts the number of different letters appearing in $w$.   For example, over the alphabet $\{\texttt{e} < \texttt{l} <  \texttt{s} < \texttt{v}\}$, we have $\Psi({\tt sleeveless}) = (4,2,3,1)$.

A \emph{factor} of a word is one of its (contiguous) subblocks.
For a given word $\infw{x}$, for all $n\ge 0$, we let $\mathcal{L}_n(\infw{x})$ denote the set of length-$n$ factors of $\infw{x}$.
A \emph{prefix} (resp., \emph{suffix}) is a starting (resp., ending) factor.
A prefix or a suffix is \emph{proper} if it is not equal to the initial word.
Infinite words are indexed starting at $0$. For such a word $\infw{x}$, we let $\infw{x}(n)$ denote its $n$th letter with $n\ge 0$ and, for $0\le m\le n$, we let $\infw{x}[m..n]$ denote the factor $\infw{x}(m)\cdots\infw{x}(n)$.

Let $\Sigma$ and $\Gamma$ be finite alphabets.
A \emph{morphism} $f \colon \Sigma^* \to \Gamma^*$ is a map satisfying $f(uv)=f(u)f(v)$ for all $u,v\in \Sigma^*$.
In particular, $f(\eps)=\eps$, and $f$ is entirely determined by the images of the letters in $\Sigma$.
For an integer $k\ge 1$, a morphism is \emph{$k$-uniform} if it maps each letter to a length-$k$ word.
A $1$-uniform morphism is called a \emph{coding}.
A sequence $\infw{x}$ is \emph{morphic} if there exist a morphism $f\colon \Sigma^* \to \Sigma^*$, a coding $g\colon \Sigma^* \to \Gamma^*$, and a letter $a \in \Sigma$ such that $\infw{x} = g(f^{\omega}(a))$, where $f^{\omega}(a) = \lim_{n\to \infty} f^n(a)$.
The latter word $f^{\omega}(a)$ is a \emph{fixed point} of $f$.

Introduced by Cobham~\cite{Cobham72} in the early 1970s, automatic words have several equivalent definitions depending on the point of view one wants to adopt.
For the case of integer base numeration systems, a comprehensive presentation of automatic sequences is~\cite{AS03}, while~\cite{RigoMaes2002,Shallit1988} treat the case of more exotic numeration systems.
We start with the definition of positional numeration systems.
Let $U=(U(n))_{n\ge 0}$ be an increasing sequence of integers with $U(0)=1$.
A positive integer $n$ can be decomposed, not necessarily uniquely, as $n=\sum_{i=0}^t c(i)\, U(i)$ with non-negative integer coefficients $c(i)$.
If these coefficients are computed greedily, then for all $j< t$ we have $\sum_{i=0}^j c(i)\, U(i)<U(j+1)$ and $\rep_U(n)=c(t)\cdots c(0)$ is said to be the {\em (greedy) $U$-representation} of $n$. 
By convention, that of $0$ is the empty word~$\varepsilon$,  and the greedy representation of $n>0$ starts with a non-zero digit.
A sequence $U$ satisfying all the above conditions defines a \emph{positional numeration system}.
Let $U=(U(n))_{n\ge 0}$ be such a numeration system.
A sequence $\mathbf{x}$ is {\em $U$-automatic} if there exists a deterministic finite automaton with output (DFAO) $\mathcal{A}$ such that, for all $n\ge 0$, the $n$th term $\infw{x}(n)$ of $\infw{x}$ is given by the output $\mathcal{A}(\rep_U(n))$ of $\mathcal{A}$.
In the particular case where $U$ is built on powers of an integer $k\ge 2$, then $\mathbf{x}$ is said to be {\em $k$-automatic}.
It is known that a sequence is $k$-automatic if and only if it is the image, under a coding, of a fixed point of a $k$-uniform morphism~\cite{AS03}.

A generalization of automatic sequences to infinite alphabets is the notion of regular sequences~\cite{AS03,RigoMaes2002,Shallit1988}.
Given a positional numeration system $U=(U(n))_{n\ge 0}$, a sequence $\infw{x}$ is \emph{$U$-regular} if there exist a column vector $\lambda$, a row vector $\gamma$ and \emph{matrix-valued} morphism $\mu$, i.e.,  the image of each letter is a matrix, such that $\infw{x}(n)=\lambda \mu(\rep_U(n)) \gamma$.
Such a system of matrices forms a \emph{linear representation} of $\infw{x}$.
In the particular case where $U$ is built on powers of an integer $k\ge 2$, then $\mathbf{x}$ is said to be {\em $k$-regular}.
Another definition of $k$-regular sequences is the following one~\cite{AS03}.
Consider a sequence $\infw{x}$ and an integer $k\ge 2$.
The \emph{$k$-kernel} of $\infw{x}$ is the set of subsequences of the form $(\infw{x}(k^e n + r))_{n\ge 0}$ where $e\geq 0$ and $r\in\{0,1,\ldots,k^e-1\}$.
Equivalently, a sequence is $k$-regular if the $\Z$-module generated by its $k$-kernel is finitely generated.
A sequence is then $k$-automatic if and only if its $k$-kernel is finite~\cite{AS03}.

Introduced in 2001 by Carpi and Maggi~\cite{CarpiMaggi2001}, synchronized sequences form a family between automatic and regular sequences.
Given a positional numeration system $U=(U(n))_{n\ge 0}$, a sequence $\infw{x}$ is \emph{$U$-synchronized} if there exists a deterministic finite automaton (DFA) that recognizes the language of $U$-representations of $n$ and $\infw{x}(n)$ in parallel. 

\section{General results}
\label{sec:general results}

In this section, we gather general results on the additive complexity of infinite words. Since abelian equivalence implies additive equivalence, we have the following lemma.

\begin{lemma}
\label{lem:comparison-abb-ad}
    For all infinite words $\infw{x}$, we have $\rho^{\add}_{\infw{x}}(n) \le \rho^{\ab}_{\infw{x}}(n)$ for all $n\ge 0$.
\end{lemma}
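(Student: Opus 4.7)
The plan is to reduce the inequality to the set-theoretic fact that if one equivalence relation refines another on a finite set, then it has at least as many classes.

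First, I would verify the implication $u \sim_{\ab} v \Rightarrow u \sim_{\add} v$ already flagged by the authors. If $u$ and $v$ are permutations of each other, then for every letter $i \in \Sigma$, $|u|_i = |v|_i$. Summing these equalities gives $|u| = \sum_i |u|_i = \sum_i |v|_i = |v|$, and multiplying by $i$ before summing gives $\sum_{i=0}^{\ell} i |u|_i = \sum_{i=0}^{\ell} i |v|_i$. Both conditions in the definition of $\sim_{\add}$ are thus satisfied.

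Next, I would observe that this implication means $\sim_{\ab}$ is a refinement of $\sim_{\add}$ on $\Sigma^*$, so in particular on the finite set $\mathcal{L}_n(\infw{x})$. Consequently, every $\sim_{\add}$-equivalence class of $\mathcal{L}_n(\infw{x})$ decomposes as a disjoint union of one or more $\sim_{\ab}$-equivalence classes. Passing to cardinalities of quotients, this yields
\[
\rho^{\add}_{\infw{x}}(n) = \#\bigl(\mathcal{L}_n(\infw{x})/{\sim_{\add}}\bigr) \le \#\bigl(\mathcal{L}_n(\infw{x})/{\sim_{\ab}}\bigr) = \rho^{\ab}_{\infw{x}}(n),
\]
as desired.

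There is no real obstacle here; the only thing to be careful about is correctly recording both conditions $|u|=|v|$ and the weighted-sum equality when deducing $\sim_{\add}$ from $\sim_{\ab}$. The argument is essentially a one-line observation about refinements of equivalence relations, so I would keep the write-up short.
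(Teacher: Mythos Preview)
Your argument is correct and matches the paper's approach exactly: the paper simply states ``Since abelian equivalence implies additive equivalence, we have the following lemma'' without further proof, and your write-up fills in precisely the refinement-of-equivalence-relations reasoning underlying that sentence.
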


As in the case of abelian complexity, we have the following lower and upper bounds for additive complexity.
See~\cite[Rk.~4.07]{Coven-Hedlund-1973} and~\cite[Thm~2.4]{RSZ-2011}.

\begin{lemma}
\label{lem:bounds on additive complexity}
    Let $k\ge 1$ be an integer and let $\infw{x}$ be an infinite word on $\{a_1<\cdots<a_k\}$.
    We have $1 \le \rho^{\add}_{\infw{x}}(n) \le \binom{n+k-1}{k-1}$ for all $n\ge 0$.
\end{lemma}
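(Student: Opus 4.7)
The plan is to deduce both bounds by standard combinatorial arguments, leveraging \cref{lem:comparison-abb-ad} for the upper bound.

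For the lower bound, I would note that for every $n \ge 0$ the set $\mathcal{L}_n(\infw{x})$ is nonempty: for $n = 0$ it contains the empty word, and for $n \ge 1$ it contains at least the length-$n$ prefix of $\infw{x}$. Hence $\mathcal{L}_n(\infw{x})/{\sim_{\add}}$ has at least one class, giving $\rho^{\add}_{\infw{x}}(n) \ge 1$.

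For the upper bound, the natural route is to sandwich additive complexity between the two quantities I already control. By \cref{lem:comparison-abb-ad} we have
\[
    \rho^{\add}_{\infw{x}}(n) \;\le\; \rho^{\ab}_{\infw{x}}(n),
\]
so it suffices to bound the abelian complexity by $\binom{n+k-1}{k-1}$. For this, I would observe that each $\sim_{\ab}$-class of length-$n$ words over $\{a_1 < \cdots < a_k\}$ is uniquely encoded by its Parikh vector $(n_1, \ldots, n_k) \in \N^k$ with $n_1 + \cdots + n_k = n$. The number of such weak compositions of $n$ into $k$ non-negative parts is the well-known stars-and-bars count $\binom{n+k-1}{k-1}$. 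Since $\rho^{\ab}_{\infw{x}}(n)$ counts only those Parikh vectors actually realized as factors of $\infw{x}$, it is bounded above by this total, and the claim follows.

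There is no genuine obstacle here: both inequalities are formal consequences of definitions and of \cref{lem:comparison-abb-ad}. The only minor point worth checking is the boundary case $n = 0$, where $\binom{k-1}{k-1} = 1$ matches the single empty factor, and the case $k = 1$, where the alphabet is a singleton and both complexities are identically $1$.
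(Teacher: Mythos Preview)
Your argument is correct. The paper does not actually give a proof of this lemma; it simply refers the reader to \cite[Rk.~4.07]{Coven-Hedlund-1973} and \cite[Thm~2.4]{RSZ-2011}, the latter being the standard abelian bound $\rho^{\ab}_{\infw{x}}(n)\le\binom{n+k-1}{k-1}$ via Parikh vectors. Your route---bounding $\rho^{\add}_{\infw{x}}$ by $\rho^{\ab}_{\infw{x}}$ through \cref{lem:comparison-abb-ad} and then invoking the stars-and-bars count on compositions---is exactly the intended one, and your treatment of the edge cases $n=0$ and $k=1$ is appropriate.
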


Note that the lower bound of the previous result is reached for (purely) periodic sequences.
The story about the upper bound is a little more puzzling.
In fact, for a window length $N\ge 1$, we can find an alphabet and a sequence over this alphabet for which its additive complexity reaches the stated upper bound on its first $N$ values.
Indeed, fix an integer $k\ge 3$ and an alphabet $\Sigma=\{a_1<\cdots<a_k\}$ of integers.
Consider the Champernowne-like sequence defined on $\Sigma$ by concatenating all words of $\Sigma^*$ in lexicographic order.
Then, for all $N\ge 1$, 
we can find a valuation of $\Sigma$ (i.e., a distribution of integral values for the letters of $\Sigma$) such that $\rho^{\add}_{\infw{x}}(n) = \binom{n+k-1}{k-1}$ for all $n\le N$.
However, it does not seem possible to find a sequence for which its additive complexity always reaches the upper bound.
This already highlights the unusual fact that the underlying alphabet of the words plays a crucial role in additive complexity.

The classical theorem of Morse and Hedlund~\cite{MorseHedlund1940} characterizes ultimately periodic infinite words by means of their factor complexity.
With the notion of additive complexity, we no longer have a characterization, only the implication below.
The converse of~\cref{pro:ult pero implies add bounded} does not hold, as illustrated by several examples in~\cref{sec:different behaviors}.

\begin{proposition}
\label{pro:ult pero implies add bounded}
The additive complexity of an ultimately periodic word is bounded.
\end{proposition}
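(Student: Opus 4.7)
The plan is to reduce the statement to the classical (and easy) fact that the \emph{factor} complexity of an ultimately periodic word is bounded, then invoke the obvious chain of inequalities
\[
\rho^{\add}_{\infw{x}}(n)\ \le\ \rho^{\ab}_{\infw{x}}(n)\ \le\ \rho_{\infw{x}}(n),
\]
where the first inequality is \cref{lem:comparison-abb-ad} and the second follows because equal factors are in particular abelian equivalent.

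To bound the factor complexity, I would write an ultimately periodic $\infw{x}$ in the form $\infw{x}=uv^\omega$ with $|u|=q$ and $|v|=p$. For a fixed length $n\ge 0$, any factor $\infw{x}[i\mathinner{..}i+n-1]$ with starting position $i\ge q$ depends only on $i\bmod p$, because the suffix $v^\omega$ of $\infw{x}$ is purely periodic with period $p$. This contributes at most $p$ distinct length-$n$ factors. The remaining starting positions $i\in\{0,1,\ldots,q-1\}$ contribute at most $q$ extra factors. Hence $\rho_{\infw{x}}(n)\le p+q$ for all $n\ge 0$.

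Combining the two observations yields $\rho^{\add}_{\infw{x}}(n)\le p+q$ for all $n\ge 0$, so the additive complexity of $\infw{x}$ is bounded (in fact, by a constant depending only on the preperiod and the period). There is no real obstacle here; the only thing worth being careful about is handling the preperiod cleanly, which the splitting $i<q$ versus $i\ge q$ does. The converse failure, which is remarked upon right after the statement, is of course a separate matter and will be illustrated later in~\cref{sec:different behaviors}.
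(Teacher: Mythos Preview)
Your argument is correct: the chain $\rho^{\add}_{\infw{x}}(n)\le\rho^{\ab}_{\infw{x}}(n)\le\rho_{\infw{x}}(n)$ together with the elementary bound $\rho_{\infw{x}}(n)\le p+q$ for $\infw{x}=uv^\omega$ with $|u|=q$, $|v|=p$ settles the proposition cleanly.

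As for comparison with the paper: the paper does not supply a proof of this proposition at all; it is stated as a self-evident observation and immediately followed by the remark that the converse fails. So there is no alternative approach to weigh against yours. Your route via factor complexity is the standard one and is entirely in the spirit of the surrounding text (which invokes Morse--Hedlund just before the statement). If anything, one could shortcut slightly by appealing directly to the known fact that ultimately periodic words have bounded abelian complexity (this is implicit in~\cite{RSZ-2011}), but your self-contained bound through $\rho_{\infw{x}}$ is just as quick and arguably more transparent.
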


Similarly, balanced words may be characterized through their abelian complexity. A word $\infw{x}$ is said to be $C$-\emph{balanced} if $| |u|_a-|v|_a | \leq C$ for all $a\in \Sigma$ and all factors $u,v$ of $\infw{x}$  of equal length. Richomme, Saari and Zamboni~\cite[Lemma~3]{RSZ-2011} proved that an infinite word $\infw{x}$ is $C$-balanced for some $C\geq 1$ if and only if $\rho^{\ab}_{\infw{x}}$ is bounded.
In our case, we only have one implication, as stated in~\cref{pro:C-balanced implies bounded}, and we also provide an upper bound.

\begin{proposition}
\label{pro:C-balanced implies bounded}
    Let $\Sigma=\{a_1<\cdots<a_k\}$ and let $\infw{x}$ be a $C$-balanced word on $\Sigma$. Then the additive complexity of $\infw{x}$ is bounded by a constant. More precisely,  we have $\rho^{\add}_{\infw{x}}(n)
 \leq 
 C \sum_{i=1}^{\lceil k/2\rceil} (a_i - a_{k+1-i}) + 1$
 for all $n\geq 0$.
%    \[ \begin{cases}
%    &\rho^{\add}_{\infw{x}}(n)\leq C(a_{2k+1}+\cdots+a_{k+1}-a_{k-1}-\cdots-a_1)+1, \text{ if } |\Sigma|=2k+1, \\
%    & \rho^{\add}_{\infw{x}}(n)\leq C(a_{2k}+\cdots+a_{k+1}-a_{k}-\cdots-a_1)+1, \text{ if } |\Sigma|=2k.
%    \end{cases} \] 
\end{proposition}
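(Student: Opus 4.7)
The plan is to observe that the additive complexity $\rho^{\add}_{\infw{x}}(n)$ equals the number of distinct integer values taken by the weight function $\add(u) := \sum_{i=1}^{k} a_i |u|_{a_i}$ as $u$ ranges over $\mathcal{L}_n(\infw{x})$. Since these weights are integers, the count is at most $M_n - m_n + 1$, where $M_n$ and $m_n$ denote respectively the maximum and minimum of $\add$ over $\mathcal{L}_n(\infw{x})$. So the whole proof reduces to bounding $M_n - m_n$ by a constant independent of $n$.

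To do this, I would pick two factors $u,v \in \mathcal{L}_n(\infw{x})$ realizing $M_n$ and $m_n$ and write
\[
M_n - m_n = \add(u) - \add(v) = \sum_{i=1}^{k} a_i\, d_i, \qquad d_i := |u|_{a_i} - |v|_{a_i}.
\]
Two constraints arise: first, $\sum_i d_i = |u| - |v| = 0$; second, the $C$-balance hypothesis forces $|d_i| \leq C$ for every $i$. The problem is thus to maximize the linear functional $\sum_i a_i d_i$ over the box $[-C,C]^k$ intersected with the hyperplane $\sum_i d_i = 0$.

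Because $a_1 < a_2 < \cdots < a_k$, the optimum is reached by loading the largest coordinates positively and the smallest ones negatively, symmetrically so that the sum is zero: set $d_i = -C$ for $i \leq \lfloor k/2 \rfloor$, $d_i = +C$ for $i \geq \lceil k/2 \rceil + 1$, and $d_{(k+1)/2} = 0$ when $k$ is odd. A rearrangement-style pairing of the $i$th smallest letter with the $i$th largest then gives
\[
M_n - m_n \;\leq\; C \sum_{i=1}^{\lceil k/2\rceil} (a_{k+1-i} - a_i),
\]
and adding $1$ yields the claimed bound (the sign in the statement reads as a typographical reversal of this pairing). The only nontrivial step is verifying that this box-LP really is optimized by the paired extremal assignment, which is a short exchange argument: swapping any $d_i, d_j$ with $i<j$ and $d_i > d_j$ only increases the objective since $a_j > a_i$. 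Everything else is bookkeeping, so no real obstacle arises; the bound is independent of $n$, which proves the statement.
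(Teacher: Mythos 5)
Your proof is correct and follows essentially the same route as the paper's: both bound $\rho^{\add}_{\infw{x}}(n)$ by the number of integers between the minimum and maximum factor weights, and both bound that gap via the $C$-balance condition using the paired extremal assignment ($d_i=-C$ on the smallest letters, $+C$ on the largest). You supply the exchange argument justifying the extremal assignment, which the paper merely asserts, and you are right that the sign in the stated bound is a typographical reversal --- the intended quantity is $C\sum_{i=1}^{\lceil k/2\rceil}(a_{k+1-i}-a_i)+1$.
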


\begin{proof}
    For all length-$n$ factors $y,z$ of $\infw{x}$ and $a\in\Sigma$, we have $||y|_a-|z|_a|\leq C$. 
    So the largest possible gap between the sum of letters of $y$ and the sum of letters of $z$ is when,  for
     all $i\in\{1,\ldots,\lceil k/2\rceil\}$, $|y|_{a_i}=|z|_{a_i}+C$ and $|y|_{a_{k+1-i}}=|z|_{a_{k+1-i}}-C$,  or vice versa (in short,  we swap $C$ letters from $a_k$ to $a_1$, $C$ others from $a_{k-1}$ to $a_2$,  and so on and so forth).
%    \[ \begin{cases}
%    &|y|_{a_{k}}=|z|_{a_{k}}+C, \text{ and } |y|_{a_1}=|z|_{a_1}-C \text{ or vice-versa,} \\ 
%    &|y|_{a_{k-1}}=|z|_{a_{k-1}}+C, \text{ and } |y|_{a_2}=|z|_{a_2}-C \text{ or vice-versa,} \\
%    &\ldots
%    \end{cases} \]
%Therefore, the number of possible sums for each length-$n$ factor is bounded by $C (a_{k}-a_1)+1$. 
\end{proof}

Note that \cref{pro:C-balanced implies bounded} is a particular case of~\cite[Theorem 4]{Banero-2013}.
However, there are infinite words with bounded additive complexity and unbounded abelian complexity, making them not balanced.
For an example, see~\cref{sec:bounded add and unbounded ab}.

Computing additive and abelian complexity might be ``easy'' in some cases.
Recently, Shallit~\cite{Shallit-2021} provided a general method to compute the abelian complexity of an automatic sequence under some hypotheses. 

\begin{theorem}[{\cite[Thm.~1]{Shallit-2021}}]
    Let $\infw{x}$ be a sequence that is automatic in some regular numeration system. Assume that
    \begin{enumerate}
        \item the abelian complexity $\rho^{\ab}_{\infw{x}}$ of $\infw{x}$ is bounded above by a constant, and
        \item the Parikh vectors of length-$n$ prefixes of $\infw{x}$ form a synchronized sequence.
    \end{enumerate}
    Then $\rho^{\ab}_{\infw{x}}$ is an automatic sequence and the DFAO computing it is effectively computable. 
\end{theorem}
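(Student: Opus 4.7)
The plan is to express abelian complexity via a bounded-size, first-order definable predicate and then invoke the standard translation from first-order-definable number-theoretic predicates over the numeration system to finite automata. The starting observation is that if $P \colon \N \to \N^k$ denotes the map sending $i$ to the Parikh vector $\Psi(\infw{x}[0..i-1])$ of the length-$i$ prefix of $\infw{x}$, then $\Psi(\infw{x}[i..i+n-1]) = P(i+n) - P(i)$, so
\[
\rho^{\ab}_{\infw{x}}(n) = \#\{P(i+n) - P(i) : i \ge 0\}.
\]
The task therefore reduces to counting, in first-order logic, the vectors $v$ for which $\exists i\colon v = P(i+n) - P(i)$.

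Assumption (2) is used next: since the Parikh vectors of prefixes form a synchronized sequence, there is a DFA reading $\rep_U(i)$ and $\rep_U(P(i))$ in parallel, so the relation ``$v = P(j) - P(i)$'' is first-order definable in the logical structure associated with the numeration system (which, for the regular numeration systems relevant here, has recognizable addition, placing us within the formalism handled by \verb|Walnut|). Composing with an existential quantifier over $i$, the predicate ``$v$ is the Parikh vector of some length-$n$ factor of $\infw{x}$'' is then also first-order definable.

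Assumption (1) supplies the crucial finitary input that keeps everything effective. Let $B$ be the uniform bound on $\rho^{\ab}_{\infw{x}}$. For each $c \in \{1, \ldots, B\}$, the predicate $\rho^{\ab}_{\infw{x}}(n) \ge c$ is expressible as ``there exist $c$ pairwise distinct vectors $v_1, \ldots, v_c$, each of which arises as the Parikh vector of some length-$n$ factor of $\infw{x}$'', a bounded first-order formula with $n$ as the free variable. Decidability of the underlying theory then yields, effectively, a DFA recognizing $\{\rep_U(n) : \rho^{\ab}_{\infw{x}}(n) = c\}$ for each $c \le B$; combining these DFAs via a coding on their product states gives the claimed DFAO computing $\rho^{\ab}_{\infw{x}}$.

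The main obstacle is precisely what assumption (1) circumvents: without a uniform bound $B$, the disjunction over $c$ in ``$\rho^{\ab}_{\infw{x}}(n) \ge c$'' would be infinite and could not be packaged as a single first-order formula, so the translation to automata would break down. With $B$ in hand, all quantification remains bounded and the construction stays inside the decidable logical fragment, allowing the DFAO to be produced by standard automata-theoretic procedures (or, concretely, by a \verb|Walnut| session that implements the predicates above).
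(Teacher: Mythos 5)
Your proof is correct and follows essentially the same route as the paper's: synchronization makes the Parikh vectors of arbitrary factors first-order definable, and the boundedness hypothesis turns the counting into finitely many decidable predicates whose resulting automata are combined into a single DFAO. The only cosmetic difference is that the paper enumerates the finite set of possible Parikh-vector differences $\Delta_{\infw{x}}(i,n)$ relative to the length-$n$ prefix and builds one automaton per value, whereas you build one automaton per complexity level $c \le B$ via a bounded existential formula; note also that $B$ itself is effectively obtained by testing the decidable sentences ``$\exists n\colon \rho^{\ab}_{\infw{x}}(n)\ge c$'' for increasing $c$, which terminates precisely because of hypothesis (1).
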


We obtain an adapted version in the framework of additive complexity.

\begin{theorem}
\label{thm:effective procedure for addcompl}
    Let $\infw{x}$ be a sequence that is automatic in some additive numeration system.
    Assume that
    \begin{enumerate}
        \item \label{it:cond-1} the additive complexity $\rho^{\add}_{\infw{x}}$ of $\infw{x}$ is bounded above by a constant, and
        \item \label{it:cond-2} the Parikh vectors of length-$n$ prefixes of $\infw{x}$ form a synchronized sequence. 
    \end{enumerate}
    Then $\rho^{\add}_{\infw{x}}$ is an automatic sequence and the DFAO computing it is effectively computable.
\end{theorem}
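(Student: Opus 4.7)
The plan is to adapt the proof of Shallit's abelian analogue~\cite[Thm.~1]{Shallit-2021} by reducing additive equivalence to equality of integer-valued weighted sums, and then expressing additive complexity through a first-order formula in the logic of the underlying additive numeration system. Using hypothesis~(2), the Parikh vector of the length-$n$ prefix of $\infw{x}$ is a synchronized function of $n$. Since the alphabet $\Sigma=\{a_1<\cdots<a_k\}$ is finite, the valuation $\val(w)=\sum_{j=1}^{k}a_j |w|_{a_j}$ of a prefix is a fixed $\Z$-linear combination of the coordinates of its Parikh vector. Because the numeration system is additive, meaning that addition of two representations is recognized by a DFA reading its inputs in parallel, this linear combination yields a synchronized sequence $n\mapsto \val(\infw{x}[0..n-1])$.

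Next, I would extend this to arbitrary factors: the valuation of $\infw{x}[i..i+n-1]$ equals $\val(\infw{x}[0..i+n-1])-\val(\infw{x}[0..i-1])$, so composing the two prefix valuations with subtraction (definable via addition) yields a DFA-recognizable predicate $\Phi(i,n,s)$ asserting that the length-$n$ factor of $\infw{x}$ starting at position $i$ has valuation $s$. Additive equivalence of two length-$n$ factors thus reduces to equality of their valuations, and I would then consider the formula
\[
    \Psi_m(n)\;\equiv\;\exists i_1,\ldots,i_m\ \exists s_1,\ldots,s_m\ \Bigl(\bigwedge_{p=1}^{m}\Phi(i_p,n,s_p)\Bigr)\wedge\Bigl(\bigwedge_{1\le p<q\le m} s_p\neq s_q\Bigr),
\]
which expresses that at least $m$ distinct additive equivalence classes of length-$n$ factors exist. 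Here hypothesis~(1) is essential: since $\rho^{\add}_{\infw{x}}(n)\le M$ for some fixed constant $M$, only finitely many values $m\in\{1,\ldots,M\}$ need to be considered, so $\rho^{\add}_{\infw{x}}(n)=m$ is expressible by the first-order formula $\Psi_m(n)\wedge\neg\Psi_{m+1}(n)$. Each such formula defines an automatic subset of $\N$, and combining them produces the desired DFAO; since all constructions are algorithmic, the automaton can be built explicitly in \verb|Walnut|.

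The main obstacle, I expect, lies in ensuring that the valuations stay representable within the synchronized framework. For integer base numeration systems this is automatic, because $\val(\infw{x}[0..n-1])$ is at most a constant times $n$, so its representation has the same order of magnitude as that of $n$. For more exotic additive numeration systems, one must verify that the valuation of a prefix admits a $U$-representation of length linearly bounded in that of $n$; otherwise the joint automaton for $\Phi$ cannot exist. Once this linearity bound on bit-length is in hand, the remaining steps reduce to the standard first-order closure properties of synchronized sequences in additive numeration systems.
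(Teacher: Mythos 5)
Your proposal is correct and follows essentially the same route as the paper: both reduce additive equivalence to equality of synchronized weighted sums obtained from the prefix Parikh vectors, and then exploit the boundedness hypothesis to express the class count in first-order logic and assemble the resulting automata into a DFAO. The only cosmetic difference is that you count classes directly via the formula $\Psi_m(n)\wedge\neg\Psi_{m+1}(n)$, whereas the paper (following Shallit's abelian argument) enumerates the finitely many possible values of the difference $\Delta_{\infw{x}}(i,n)$ relative to the length-$n$ prefix; your closing remark about the valuation being linearly bounded, hence representable in the synchronized framework, is a point the paper leaves implicit.
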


\begin{proof}
Let $\Sigma=\{a_1<\cdots<a_k\} \subset \mathbb{N}$ be an ordered finite alphabet. The \emph{weighted Parikh vector} of a finite word $w\in \Sigma^{*}$ is $\psi^{*}(w)=(a_1 |w|_{a_1}, \ldots, a_k |w|_{a_k})$. Then two words $x,y$ are additively equivalent if and only if $\sum_{a\in \Sigma}[\psi^{*}(x)]_a=\sum_{a\in \Sigma}[\psi^{*}(y)]_a$, where $[\psi^{*}(x)]_a$ designates the $a$th component of the vector $\psi^{*}(x)$.
    We adapt the proof of {\cite[Thm.~1]{Shallit-2021}} in the framework of the additive complexity. The steps to find the automaton computing the additive complexity $\rho^{\add}_{\infw{x}}$ are the following: \begin{enumerate}
        \item Since the Parikh vectors of length-$n$ prefixes of $\infw{x}$ form a synchronized sequence by assumption, so are the weighted Parikh vectors for arbitrary length-$n$ factors $\infw{x}[i..i+n-1]$. This is expressible in first-order logic.
        \item For $i\geq 0$ and $n\geq 1$, let us denote $\Delta_{\infw{x}}(i,n)$ the following integer \[\Delta_{\infw{x}}(i,n)=\sum_{a\in \Sigma}[\psi^{*}(\infw{x}[i..i+n-1])]_a-\sum_{a\in \Sigma}[\psi^{*}(\infw{x}[0..n-1])]_a.\] The additive complexity $\rho^{\add}_{\infw{x}}$ is bounded if and only if there is a constant $C$ such that the cardinality of the set $A^{*}_n:= \lbrace \Delta_{\infw{x}}(i,n) : i\geq 0  \rbrace$, is bounded above by $C$ for all $n\geq 1$. 
        \item In this case, the range of possible values of $A^{*}_n$ is finite (it may take at most $2C+1$ values) and can be computed algorithmically. 
        \item Once this range is known, there are finitely many possibilities for $\Delta_{\infw{x}}(i,n)$ for all $i\geq 0$. Then, we compute the set $S$ of all of these possibilities. 
        \item Once we have $S$, we can test each of the finitely many values to see if it occurs for some $n$, and we obtain an automaton recognizing those $n$ for which it does. 
        \item All the different automata can then be combined into a single DFAO computing $\rho^{\add}_{\infw{x}}(n)$, using the direct product construction. 
    \end{enumerate}
    This finishes the proof.
\end{proof}

\begin{remark}
\label{rk: constructive proof vs semi-group trick halts}
The advantage of the proof above is that it is constructive.
However, in practice, it will be more convenient to use the so-called \emph{semigroup trick} algorithm, as discussed in~\cite[\S~4.11]{Walnut2}. This algorithm should be used when a regular sequence is believed to be automatic, i.e., when it takes only finitely many values. The semigroup trick algorithm halts if and only if the sequence is automatic and produces a DFAO if this is the case. Therefore,~\cref{thm:effective procedure for addcompl} ensures that, under some mild hypotheses, the algorithm halts. 

%see~\Cref{Appendix:semigroup trick} for more details and a simple example. 

\end{remark}

\cref{thm:effective procedure for addcompl} may be applied to a particular family of infinite words: those that are generated by so-called Parikh-collinear morphisms.
In recent years, combinatorists have been studying them; see, e.g., \cite{CRSZ-2011,Allouche-Dekking-Queffelec-2021,RSW-2023,RSW-2024}.

\begin{definition}
A morphism $\varphi \colon \Sigma^* \to \Delta^*$ is \emph{Parikh-collinear} if the Parikh vectors $\Psi(\varphi(a))$, $a\in \Sigma$, are collinear (or pairwise $\Z$-linearly dependent).
In other words, the associated \emph{adjacency matrix} of $\varphi$, i.e.,  the matrix whose columns are the vectors $\Psi(\varphi(a))$, for all $a\in \Sigma$, has rank $1$.
\end{definition}

\begin{theorem}[{\cite{RSW-2023,RSW-2024}}]
\label{thm:Parikh-collinear-automatic-ab}
Let $\varphi \colon \Sigma^* \to \Sigma^*$ be a Parikh-collinear morphism prolongable on the letter $a$, and write $\infw{x}:=\varphi^{\omega}(a)$.
Then the abelian complexity function $\rho^{\ab}_{\infw{x}}$ of $\infw{x}$ is $k$-automatic for $k = \sum_{b \in \Sigma}|\varphi(b)|_b$.
Moreover, the automaton generating $\rho^{\ab}_{\infw{x}}$ can be effectively computed given $\varphi$ and $a$.
\end{theorem}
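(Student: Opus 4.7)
The plan is to reduce the statement to the abelian analog of \cref{thm:effective procedure for addcompl}. More precisely, I would verify that the abelian complexity $\rho^{\ab}_{\infw{x}}$ is bounded, then show that the Parikh vectors of prefixes of $\infw{x}$ form a synchronized sequence in a suitable numeration system, and finally apply the abelian version of the algorithmic procedure described in the proof of \cref{thm:effective procedure for addcompl} to extract a DFAO computing $\rho^{\ab}_{\infw{x}}$.

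For boundedness, I would exploit Parikh-collinearity directly. Writing $\Psi(\varphi(b))=\alpha_b v$ for a common vector $v$ and nonnegative integers $\alpha_b$, one gets, for every word $w\in \Sigma^*$, $\Psi(\varphi(w))=\bigl(\sum_b \alpha_b|w|_b\bigr) v$, so the abelianization of an image depends only on a single scalar statistic of $w$. Desubstituting an arbitrary length-$n$ factor $u$ of $\infw{x}$, i.e., writing $u$ as the interior of $\varphi(w)$ for some factor $w$ of $\infw{x}$ with boundedly truncated edges, one expresses $\Psi(u)$ as a scalar multiple of $v$ (the bulk) plus a boundary correction of uniformly bounded size. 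Since the bulk depends on $n$ alone, only finitely many Parikh vectors can arise for length-$n$ factors, uniformly in $n$, so $\rho^{\ab}_{\infw{x}}$ is bounded.

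For synchronization, I would work in the Dumont--Thomas numeration system $U$ associated with $\varphi$, in which $\infw{x}$ is $U$-automatic. The $U$-representation of an index $n$ encodes a descent through the self-similar decomposition $\infw{x}=\varphi^j(\infw{x})$, hence $\Psi(\infw{x}[0..n-1])$ decomposes into a sum of vectors of the form $\Psi(\varphi^j(c))$ for letters $c\in \Sigma$; by iterated Parikh-collinearity, each of these is a scalar multiple of $v$, so the running scalar can be tracked by a finite-state device reading $\rep_U(n)$. This yields a synchronized automaton for the prefix Parikh vectors. A final check, using that the adjacency matrix of $\varphi$ has rank one and unique nonzero eigenvalue $k=\sum_b|\varphi(b)|_b$, allows one to translate the Dumont--Thomas numeration into ordinary base-$k$ on the relevant indices, giving $k$-automaticity as claimed.

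The main obstacle will be the synchronization step: one must describe explicitly the automaton that, on input $\rep_U(n)$, accumulates the scalar contributions $\alpha_c\cdot(\text{scaling at level }j)$ coming from each digit of $\rep_U(n)$, and then verify that the resulting pairing with $\Psi(\infw{x}[0..n-1])$ is recognized in parallel by a finite automaton. Once this is in place, the effective procedure adapted from \cref{thm:effective procedure for addcompl}, specialized from additive to abelian equivalence, directly yields a DFAO for $\rho^{\ab}_{\infw{x}}$, together with the effective computability claim.
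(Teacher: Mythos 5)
This theorem is imported by the paper from \cite{RSW-2023,RSW-2024} without proof, so there is no in-paper argument to compare against; your sketch can only be judged on its own merits and against the way the paper treats the additive analogue (\cref{pro:Parikh collinear automatic additive complexity}), whose architecture --- boundedness, plus synchronization of prefix Parikh vectors, plus the effective procedure of \cref{thm:effective procedure for addcompl} (in its abelian form \cite{Shallit-2021}) --- your plan correctly mirrors. Your boundedness argument is sound: writing $\Psi(\varphi(b))=\alpha_b v$, one has $|\varphi(w)|=|v|_1\sum_b\alpha_b|w|_b$ where $|v|_1$ is the sum of the entries of $v$, so the ``bulk'' scalar of a desubstituted length-$n$ factor is determined by $n$ up to the boundedly many choices of boundary, and only $O(1)$ Parikh vectors occur per length.

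The genuine gap is the base $k$. Your route yields automaticity of $\infw{x}$, of its prefix Parikh vectors, and hence of $\rho^{\ab}_{\infw{x}}$ in the Dumont--Thomas numeration system of $\varphi$, and you defer to a ``final check'' the translation into ordinary base $k$. That translation is not a check: for a non-uniform morphism the Dumont--Thomas system is in general not interdefinable with a base-$k$ system, and the assertion that $\varphi^{\omega}(a)$ is $k$-automatic for $k=\sum_b|\varphi(b)|_b$ is precisely the nontrivial content of \cite{RSW-2023}. The mechanism there is that the rank-one adjacency matrix $M$ satisfies $M^2=kM$ (its trace is its unique nonzero eigenvalue), whence $|\varphi^{j+1}(b)|=k\,|\varphi^{j}(b)|$ for $j\ge 1$; this is what lets one realize $\infw{x}$ as a coding of the fixed point of an explicit $k$-uniform morphism on an alphabet recording positions inside the blocks $\varphi(b)$ --- the very construction the paper invokes in \cref{prop:example_Parikh_collinear}. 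Without this step you cannot name the base $k$ at all. A smaller imprecision: a finite-state device cannot literally ``track the running scalar,'' which grows linearly in $n$; what one actually shows is that each component of $\Psi(\infw{x}[0..n-1])$ differs from $\lfloor n v_i/|v|_1\rfloor$ by a bounded $k$-automatic quantity, and such bounded perturbations of linear functions are $k$-synchronized --- this is the content of \cite[Lemma~26]{RSW-2024} that the paper cites for the additive case.
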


Putting together~\cref{lem:comparison-abb-ad,thm:Parikh-collinear-automatic-ab}, we obtain the following.

\begin{corollary}
\label{cor:bounded add}
    Let $\infw{x}$ be a fixed point of a Parikh-collinear morphism.
    Then the abelian and additive complexity functions of $\infw{x}$ are bounded.
\end{corollary}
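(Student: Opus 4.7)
The plan is to chain the two ingredients cited immediately before the statement. First, I invoke Theorem \ref{thm:Parikh-collinear-automatic-ab}: since $\infw{x}$ is a fixed point of a Parikh-collinear morphism $\varphi$ prolongable on some letter $a$, the abelian complexity function $\rho^{\ab}_{\infw{x}}$ is $k$-automatic with $k=\sum_{b\in\Sigma}|\varphi(b)|_b$. The key observation I would then use is that any automatic sequence is the output of a DFAO, which has finitely many states and hence produces only finitely many output values. Consequently $\rho^{\ab}_{\infw{x}}$ takes only finitely many values, so it is bounded by a constant.

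Next, I would apply Lemma \ref{lem:comparison-abb-ad}, which gives $\rho^{\add}_{\infw{x}}(n)\le \rho^{\ab}_{\infw{x}}(n)$ for all $n\ge 0$. Since the right-hand side is bounded by the previous step, so is the left-hand side, and this yields the boundedness of the additive complexity as well.

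There is essentially no obstacle here: the heavy lifting is done by Theorem \ref{thm:Parikh-collinear-automatic-ab}, and the corollary only needs the elementary facts that automatic sequences take finitely many values and that additive equivalence is coarser than abelian equivalence. The only point worth flagging, to keep the argument clean, is to explicitly note the passage from ``automatic'' to ``bounded'' (rather than leaving it implicit), since later sections of the paper will rely on this distinction between regular and automatic behaviour of complexity sequences.
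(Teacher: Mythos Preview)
Your proposal is correct and matches the paper's own argument exactly: the paper states the corollary as an immediate consequence of \cref{lem:comparison-abb-ad,thm:Parikh-collinear-automatic-ab}, with the implicit step (which you make explicit) that an automatic sequence takes only finitely many values and is therefore bounded. Your added remark about flagging the passage from ``automatic'' to ``bounded'' is a reasonable expository touch but not something the paper spells out.
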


\begin{theorem}
\label{pro:Parikh collinear automatic additive complexity}
    Let $\varphi \colon \Sigma^* \to \Sigma^*$ be a Parikh-collinear morphism prolongable on the letter $a$, and write $\infw{x}:=\varphi^{\omega}(a)$.
The additive complexity function $\rho^{\add}_{\infw{x}}$ of $\infw{x}$ is $k$-automatic for $k = \sum_{b \in \Sigma}|\varphi(b)|_b$.
Moreover, the automaton generating $\rho^{\add}_{\infw{x}}$ can be effectively computed given $\varphi$ and $a$.
\end{theorem}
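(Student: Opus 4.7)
The plan is to reduce to~\cref{thm:effective procedure for addcompl}, applied to $\infw{x} := \varphi^{\omega}(a)$ in the integer base $k = \sum_{b \in \Sigma}|\varphi(b)|_b$. To invoke it, I need to verify three hypotheses: (a) $\infw{x}$ is $k$-automatic; (b) $\rho^{\add}_{\infw{x}}$ is bounded above by a constant; (c) the Parikh vectors of the length-$n$ prefixes of $\infw{x}$ form a $k$-synchronized sequence.

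Condition (b) is immediate from~\cref{cor:bounded add}. For condition (c), I would appeal to the proof of~\cref{thm:Parikh-collinear-automatic-ab} in~\cite{RSW-2023,RSW-2024}: its central technical step is to construct, in base $k$, a DFA that reads $\rep_k(n)$ and $\rep_k(|\infw{x}[0..n-1]|_b)$ in parallel for each $b \in \Sigma$, which is precisely the synchronization we want. For condition (a), I would invoke the fact that the fixed point of a Parikh-collinear morphism is $k$-automatic: the collinearity of the vectors $\Psi(\varphi(b))$ allows a block re-encoding of $\varphi$ as a $k$-uniform morphism on an enriched alphabet, after which $\infw{x}$ appears as the image under a coding of a $k$-uniform fixed point and is therefore $k$-automatic.

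With (a)--(c) in place,~\cref{thm:effective procedure for addcompl} delivers both the $k$-automaticity of $\rho^{\add}_{\infw{x}}$ and the effective computability of the corresponding DFAO; each of the three ingredients is itself effective. The main obstacle is condition (c): while the synchronization of the prefix Parikh vectors is essentially what is proved inside~\cref{thm:Parikh-collinear-automatic-ab}, some care is needed to confirm that its formulation matches what~\cref{thm:effective procedure for addcompl} requires, in particular that the synchronizing DFA reads its inputs in the same additive numeration system (base $k$) as the one in which $\infw{x}$ is automatic. Once this alignment is secured, the remaining work reduces to the algorithmic bookkeeping of steps (1)--(6) inside the proof of~\cref{thm:effective procedure for addcompl}: compute the finite range of additive displacements $\Delta_{\infw{x}}(i,n)$, translate each attainable value into a first-order predicate over the base-$k$ numeration, and take the direct product of the resulting DFAs to obtain the DFAO generating $\rho^{\add}_{\infw{x}}$.
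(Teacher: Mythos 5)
Your proposal is correct and follows essentially the same route as the paper: boundedness of $\rho^{\add}_{\infw{x}}$ comes from~\cref{cor:bounded add}, synchronization of the prefix Parikh vectors comes from the work of Rigo, Stipulanti and Whiteland (the paper cites \cite[Lemma~26]{RSW-2024} for exactly this), and then~\cref{thm:effective procedure for addcompl} concludes. Your extra care about the $k$-automaticity of $\infw{x}$ itself and the matching of numeration systems is sound but unproblematic, since the block re-encoding and the synchronization both live in the same base $k = \sum_{b \in \Sigma}|\varphi(b)|_b$.
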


\begin{proof}
    By~\cref{cor:bounded add}, $\rho^{\add}_{\infw{x}}$ is bounded by a constant, so~\cref{it:cond-1} of~\cref{thm:effective procedure for addcompl} is satisfied.
    Then~\cref{it:cond-2} of of~\cref{thm:effective procedure for addcompl} holds by~\cite[Lemma~26]{RSW-2024}.
    Hence,~\cref{thm:effective procedure for addcompl} allows to finish the proof.   
\end{proof}

We now give a detailed example of~\cref{pro:Parikh collinear automatic additive complexity}.
Let $f \colon \{0,1,2\}^* \to \{0,1,2\}$ be defined by $0\mapsto 012, 1\mapsto 112002, 2\mapsto \varepsilon$. Since the three vectors $\Psi(f(0))=(1,1,1)$, $\Psi(f(1))=(2,2,2)$ and $\Psi(f(2))=(0,0,0)$ are collinear, it follows that $f$ is Parikh-collinear.
Consider $\infw{x}=012112002112002\cdots$, the fixed point of $f$ starting with $0$. In \cite{RSW-2024}, the authors proved that the abelian complexity of $\infw{x}$ is equal to the eventually periodic word $135(377)^{\omega}$. We have a similar result for additive complexity. 

\begin{proposition}\label{prop:example_Parikh_collinear}
  Let $f \colon \{0,1,2\}^* \to \{0,1,2\}, 0\mapsto 012, 1\mapsto 112002, 2\mapsto \varepsilon$.
The additive complexity of the fixed point $\infw{x}=0121120022112002\cdots$ of $f$ is equal to $134(355)^{\omega}$. 
  \end{proposition}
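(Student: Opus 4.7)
The plan is to apply \cref{pro:Parikh collinear automatic additive complexity} directly to the morphism $f$: since $f$ is Parikh-collinear and $\sum_{b \in \{0,1,2\}} |f(b)|_b = 1 + 2 + 0 = 3$, the function $\rho^{\add}_{\infw{x}}$ is $3$-automatic and the theorem delivers an effective procedure producing its DFAO. The task then reduces to running this procedure and identifying the resulting output as $134(355)^{\omega}$.

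Concretely, I would follow the strategy of the proof of \cref{thm:effective procedure for addcompl} inside \verb|Walnut|, combined with the semigroup trick recalled in \cref{rk: constructive proof vs semi-group trick halts}. The preliminary ingredient is an encoding of $\infw{x}$ in a numeration system for which the Parikh vectors of prefixes of $\infw{x}$ form a synchronized sequence; such a system is guaranteed for fixed points of Parikh-collinear morphisms by Lemma~26 of~\cite{RSW-2024}, which is precisely the input used to derive condition~\eqref{it:cond-2} of \cref{thm:effective procedure for addcompl}. Using this synchronized automaton, one writes a first-order formula expressing the additive-equivalence relation ``the length-$n$ factors $\infw{x}[i..i+n-1]$ and $\infw{x}[j..j+n-1]$ have the same weighted letter-sum $|\cdot|_1 + 2\,|\cdot|_2$''.

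Because \cref{cor:bounded add} ensures $\rho^{\add}_{\infw{x}}$ is bounded, the discriminant $\Delta_{\infw{x}}(i,n)$ introduced in step~(2) of the proof of \cref{thm:effective procedure for addcompl} takes only finitely many values. Enumerating these, constructing for each the DFA recognizing the $n$ it is attained at, and combining them via the direct-product construction of step~(6) yields the DFAO computing $\rho^{\add}_{\infw{x}}(n)$. After minimization, reading off its outputs along $n = 0, 1, 2, \ldots$ gives the announced eventually periodic sequence $1, 3, 4, 3, 5, 5, 3, 5, 5, \ldots = 134(355)^{\omega}$.

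The main obstacle is not logical but implementational: setting up the numeration system and the synchronized prefix-Parikh automaton for $\infw{x}$ in a format accepted by \verb|Walnut|, given that $f$ is non-uniform and so no built-in integer base will do. Once this infrastructure is in place, the formalization of the additive-equivalence predicate, the invocation of the semigroup trick, and the final comparison of the minimal DFAO with the eventually periodic pattern $134(355)^{\omega}$ are all mechanical \verb|Walnut| queries.
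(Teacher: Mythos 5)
Your overall strategy --- synchronized prefix Parikh vectors, a first-order predicate for additive equivalence of same-length factors, a linear representation counting the equivalence classes, then minimization and the semigroup trick --- is exactly the route the paper takes. The one point you flag as the main obstacle is, however, based on a misconception, and it is resolved by a fact you already computed: since $\sum_{b}|f(b)|_b = 3$, the fixed point $\infw{x}$ itself (not merely $\rho^{\add}_{\infw{x}}$) is $3$-automatic, so by the procedure of~\cite{RSW-2023} one writes $\infw{x}=\tau(h^{\omega}(0))$ for an explicit $3$-uniform morphism $h$ and a coding $\tau$, and the whole computation lives in the built-in base-$3$ system \texttt{msd\_3}; no exotic numeration system is needed despite $f$ being non-uniform (indeed erasing). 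With that recoding supplied, the remainder of your plan coincides with the paper's proof: \texttt{Walnut} returns a linear representation (of size $55$, minimized to rank $7$), the semigroup trick terminates with a $4$-state DFAO, and inspecting it (or checking two regular-expression families of base-$3$ representations in \texttt{Walnut}) yields $134(355)^{\omega}$.
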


\begin{proof}
Computing $\sum_{a=0}^2 \lvert f(a) \rvert_a=3$, we know from classical results that $\infw{x}$ is $3$-automatic.
We thus know that $\infw{x}$ is generated by a $3$-uniform morphism.
Following the procedure of \cite{RSW-2023}, we have $\infw{x}=\tau(h^{\omega}(0))$ with $h:0,6\mapsto 012$, $1,4\mapsto 134$, $2,3,5\mapsto 506$, and the coding $\tau:0,5\mapsto 0$,  $1,3\mapsto 1$, and $2,4,6\mapsto 2$. 

In \verb|Walnut|, we can compute the synchronized functions \texttt{fac0}, \texttt{fac1} and \texttt{fac2} that computes the number of letter $0$, $1$ and $2$ in every factor of $\infw{x}$, see~\cite{RSW-2024} for more details.
%Let us recall that we are considering the example of the Parikh-collinear morphism $f \colon \{0,1,2\}^* \to \{0,1,2\}, 0\mapsto 012, 1\mapsto 112002, 2\mapsto \varepsilon$.
First, we introduce the $3$-automatic word $\infw{x}$ to \verb|Walnut| as follows:
\begin{verbatim}
morphism h "0->012 1->134 2->506 3->506 4->134 5->506 6->012";
morphism tau "0->0 1->1 2->2 3->1 4->2 5->0 6->2";
promote H h;
image X tau H;
\end{verbatim}
From~\cite{RSW-2023}, we know that the sequence mapping $n\ge 0$ to the number of each letter in the prefix of length $n+1$ of $\infw{x}$ is synchronized. In \verb|Walnut|, we require the following commands: 
\begin{verbatim}
def cut "?msd_3 n=0 | (n>=3 & X[n-1]=@2 & ~(X[n-3]=@1))";
def prev "?msd_3 x<=n & $cut(x) & (Ay (y>x & y<=n)=>~$cut(y))";
def prefn0 "?msd_3 (n<=2 & y=1) | (3<=n & Em,z ($prev(n,m) 
    & 3*y=m+3*z & ((X[m]=@0 & z=1) | (X[m]=@1 & ((n<m+3 & z=0) 
    | (n=m+3 & z=1) | (n>=m+4 & z=2))))))";
def prefn1 "?msd_3 Em,z $prev(n,m) & 3*y=m+3*z 
    & ((X[m]=@0 & ((m=n & z=0) | (n>=m+1 & z=1))) 
    | (X[m]=@1 & ((m=n & z=1) | (n>=m+1 & z=2))))";
def prefn2 "?msd_3 Em,z $prev(n,m) & 3*y=m+3*z 
    & ((X[m]=@0 & ((n<m+2 & z=0) | (m+2=n & z=1))) 
    | (X[m]=@1 & ((n<m+2 & z=0) | (n>=m+2 & n<m+5 & z=1) 
    | (n=m+5 & z=2))))";
\end{verbatim} The next step is to determine the number of each letter in every factor of $\infw{x}$. We compute the corresponding synchronized functions $n\mapsto |\infw{x}[i..i+n-1]|_a$ for $a\in\{0,1,2\}$ in the following way: \begin{verbatim}
def fac0 "?msd_3 Aq,r ($prefn0(i+n,q) & $prefn0(i,r)) => (q=r+s)":
def fac1 "?msd_3 Aq,r ($prefn1(i+n,q) & $prefn1(i,r)) => (q=r+s)":
def fac2 "?msd_3 Aq,r ($prefn2(i+n,q) & $prefn2(i,r)) => (q=r+s)":
\end{verbatim} 

Next, wecompute additive complexity of the fixed point $\infw{x}$ of $f$. So we test whether the factors $u=\infw{x}[i..i+n-1]$ and $v=\infw{x}[j..j+n-1]$ of $\infw{x}$ are additively equivalent.
For that, it is enough to check the equality between the quantities $|u|_1+2|u|_2$ and $|v|_1+2|v|_2$.

\begin{verbatim}
def addFacEq "?msd_3 Ep,q,r,s $fac1(i,n,p) & $fac2(i,n,q) 
    & $fac1(j,n,r) & $fac2(j,n,s) & p+2*q=r+2*s":
\end{verbatim}
Finally, we write that $\infw{x}[i..i+n-1]$ is a novel occurrence of a length-$n$ factor of $\infw{x}$ representing its additive equivalence class and obtain a linear representation for the number of such positions $i$ as follows:

\begin{verbatim}
eval addCompRepLin n "?msd_3 Aj j<i => ~$addFacEq(i,j,n)":
\end{verbatim} \verb|Walnut| then returns a linear representation of size $55$.

The first step is to take the linear representation computed by \verb|Walnut|, and minimize it. The result is a linear representation of rank $7$, using the algorithm in \cite[\S~2.3]{Berstel&Reutenauer:2011}. 
Once we have this linear representation, we can carry out the so-called semigroup trick algorithm, as discussed in~\cite[\S~4.11]{Walnut2}.
As it terminates, we prove that the additive complexity of the word $\infw{x}$ is bounded, and takes on only the values $\{1,3,4,5\}$ for $n \geq 0$. Furthermore, it produces a $4$-state DFAO computing the additive complexity, called \verb|addCompExample|, that we display in~\Cref{fig:automaton-example}. 

\begin{figure}[ht] 
\begin{center}
\begin{tikzpicture}
\tikzstyle{every node}=[shape=circle,fill=none,draw=black,minimum size=30pt,inner sep=2pt]
\node(a) at (0,0) {$q_0/1$};
\node(b) at (2.5,0) {$q_1/3$};
\node(c) at (0,-2.5) {$q_2/4$};
\node(d) at (2.5,-2.5) {$q_3/5$};

\tikzstyle{every node}=[shape=circle,fill=none,draw=none,minimum size=10pt,inner sep=2pt]
\node(a0) at (-1,0) {};

\tikzstyle{every path}=[color =black, line width = 0.5 pt]
\tikzstyle{every node}=[shape=circle,minimum size=5pt,inner sep=2pt]
\draw [->] (a0) to [] node [] {}  (a);

\draw [->] (a) to [loop above] node [] {$0$}  (a);
\draw [->] (a) to [] node [above] {$1$}  (b);
\draw [->] (a) to [] node [left] {$2$}  (c);

\draw [->] (b) to [loop above] node [] {$0$}  (b);
\draw [->] (b) to [bend left] node [right] {$1,2$}  (d);

\draw [->] (c) to [] node [left] {$0$}  (b);
\draw [->] (c) to [] node [below] {$1,2$}  (d);

\draw [->] (d) to [loop right] node [] {$1,2$}  (d);
\draw [->] (d) to [bend left] node [left] {$0$}  (b);

;
\end{tikzpicture}
\caption{A four-state DFAO computing the additive complexity of the fixed point of $f \colon \{0,1,2\}^* \to \{0,1,2\}, 0\mapsto 012, 1\mapsto 112002, 2\mapsto \varepsilon$.}
\label{fig:automaton-example}
\end{center}
\end{figure}
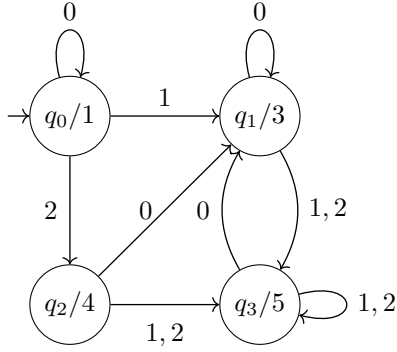

By inspecting this DFAO, we easily prove that the additive complexity of $\infw{x}$ is $134(355)^{\omega}$. This could also be checked easily with \verb|Walnut| with the following commands: 
\begin{verbatim} 
reg form3 msd_3 "0*(0|1|2)*0":
eval check3 "?msd_3 An ($form3(n) & n>=3) => addCompExample[n]=@3":
reg form5 msd_3 "0*(0|1|2)*(1|2)":
eval check5 "?msd_3 An ($form5(n) & n>=3) => addCompExample[n]=@5":
\end{verbatim} and both return \verb|True|.  Notice that these two forms cover all integers $n\geq 3$ and the first few values can be checked by hand.
 \end{proof}

\section{Different behaviors and curiosities}
\label{sec:different behaviors}

In this section, we exhibit different behaviors between the additive and abelian complexity functions by making use of the software \verb|Walnut|.
By~\cref{lem:comparison-abb-ad}, the behavior of additive complexity of a sequence is constrained by its abelian complexity.
Here we show that the functions may behave differently; in particular, see~\cref{sec:bounded add and unbounded ab}.

\subsection{Bounded additive and abelian complexities}

\subsubsection{The Tribonacci word}
\label{sec: Tribonacci}

The \emph{Tribonacci word} $\infw{tr}$ is the fixed point of the morphism $0\mapsto 01$, $1\mapsto 02$, $2\mapsto 0$. This well-known word belongs to the family of episturmian words, a generalization of the famous Sturmian words. This word is \emph{Tribonacci}-automatic, where the underlying numeration system is built on the sequence of \emph{Tribonacci numbers} defined by $T(0)=1$, $T(1)=2$, $T(2)=4$, and $T(n)=T(n-1)+T(n-2)+T(n-3)$ for all $n\ge 3$. Notice that this word is not the fixed point of a Parikh-collinear morphism; otherwise it would be $k$-automatic for some integer $k\geq 2$. A generalization of Cobham's theorem for substitutions~\cite{Durand2011} would then imply that  $\infw{tr}$ is ultimately periodic. The possible values of the abelian complexity of the word $\infw{tr}$ were studied in~\cite[Thm.~1.4]{RSZ-2010}.
Also see~\cref{fig:add-complexity-Trib}.

\begin{theorem}[{\cite[Thm.~1.4]{RSZ-2010}}]
Let $\infw{tr}$ be the Tribonacci word, i.e., the fixed point of the morphism $0\mapsto 01$, $1\mapsto 02$, $2\mapsto 0$.
The abelian complexity function $\rho^{\ab}_{\infw{tr}}$ takes on only the values in the set $\{3, 4, 5, 6, 7\}$ for $n\ge 1$.    
\end{theorem}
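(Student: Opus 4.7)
The plan is to follow the Walnut-based strategy that underlies \cref{thm:effective procedure for addcompl} of this paper, specialized to the Tribonacci numeration system. Since the Tribonacci word $\infw{tr}$ is Tribonacci-automatic, I would first introduce it into Walnut via its natural DFAO in the \verb|msd_trib| numeration system. The next ingredient is a synchronized automaton computing, for each $a\in\{0,1,2\}$, the function $n\mapsto |\infw{tr}[0..n-1]|_a$. Such a synchronized prefix-counting function exists for Tribonacci: it can either be imported from the standard repertoire of Tribonacci synchronized functions (Mousavi and Shallit have constructed these), or derived directly from the $3$-interval exchange structure of the coding of $\infw{tr}$ using the fact that the positions of each letter form a Beatty-like sequence expressible in first-order logic over the Tribonacci numeration system.

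Second, I would build synchronized formulas $\mathrm{fac}_a(i,n,y)$ asserting $y=|\infw{tr}[i..i+n-1]|_a$ by simple subtraction of prefix counts, and then express abelian equivalence of two length-$n$ factors by equality of all three counts:
\[
\mathrm{abEq}(i,j,n) \equiv \bigwedge_{a\in\{0,1,2\}} \exists p,q\ \bigl(\mathrm{fac}_a(i,n,p) \wedge \mathrm{fac}_a(j,n,q) \wedge p=q\bigr).
\]
This is essentially the construction used in the proof of \cref{prop:example_Parikh_collinear}, but in the Tribonacci base.

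Third, $\rho^{\ab}_{\infw{tr}}(n)$ is the number of positions $i$ such that $\infw{tr}[i..i+n-1]$ is a novel representative of its abelian class, giving a linear representation computed by Walnut. The key point, which replaces the Parikh-collinear hypothesis used in \cref{pro:Parikh collinear automatic additive complexity}, is a uniform bound on $\rho^{\ab}_{\infw{tr}}$; this can be deduced \emph{a posteriori} from the semigroup-trick algorithm of \cref{rk: constructive proof vs semi-group trick halts}: if the algorithm terminates on the linear representation, then $\rho^{\ab}_{\infw{tr}}$ is automatic and its finite set of values is read off the DFAO. To conclude, I would verify with Walnut that for every $n\ge 1$ the value lies in $\{3,4,5,6,7\}$, and that each of the five values is attained, by evaluating the five existential formulas $\exists n\ (n\ge 1 \wedge \rho^{\ab}_{\infw{tr}}(n)=v)$.

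The main obstacle is the existence and effective construction of the synchronized prefix-counting automata in the Tribonacci numeration system. Unlike the $k$-automatic situation, one cannot simply invoke \cite[Lemma~26]{RSW-2024}, since $\infw{tr}$ is not the fixed point of a Parikh-collinear morphism (as noted just before the theorem, Cobham--Durand would otherwise force $\infw{tr}$ to be ultimately periodic). A secondary, purely combinatorial route would bypass Walnut altogether by analyzing return words to each factor and invoking the $3$-interval exchange structure to bound the imbalance $\bigl||u|_a - |v|_a\bigr|$ uniformly for factors $u,v$ of equal length, which directly constrains the abelian classes; this is the original strategy of Richomme, Saari, and Zamboni, and would be my fallback if the synchronized automata proved too unwieldy to construct by hand.
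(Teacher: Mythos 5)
Your proposal is correct and follows essentially the same route the paper associates with this statement: the theorem is quoted from Richomme--Saari--Zamboni, but the paper explicitly notes it was reproved by Shallit in \cite{Shallit-2021} via Walnut with synchronized Tribonacci prefix-counting automata, and the paper's own proof of \cref{thm:tribo} uses exactly the \texttt{tribsync0}, \texttt{tribsync1}, \texttt{tribsync2} machinery you describe (followed by factor counts, an equivalence predicate, a linear representation, minimization, and the semigroup trick). Your identification of the synchronized prefix-counting automata as the key ingredient, and of the original combinatorial balance argument as the fallback, matches the paper's presentation.
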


This result was reproved by Shallit~\cite{Shallit-2021} using \verb|Walnut| by providing an automaton computing $\rho^{\ab}_{\infw{tr}}$. Furthermore, this automaton allows us to prove that each value is taken infinitely often. We prove the following result concerning the additive complexity of the Tribonacci word.
See again~\cref{fig:add-complexity-Trib}.

\begin{theorem} \label{thm:tribo}
Let $\infw{tr}$ be the Tribonacci word, i.e., the fixed point of the morphism $0\mapsto 01$, $1\mapsto 02$, $2\mapsto 0$.
The additive complexity function $\rho^{\add}_{\infw{tr}}$ takes on only the values in the set $\{3, 4, 5\}$ for $n\ge 1$. Furthermore, each of the three values is taken infinitely often and it is computed by a 76-state Tribonacci DFAO.
\end{theorem}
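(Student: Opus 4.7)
The plan is to mirror the strategy of \cref{prop:example_Parikh_collinear}, but working in the Tribonacci numeration system rather than in base $3$. Since $\infw{tr}$ is Tribonacci-automatic and its abelian complexity $\rho^{\ab}_{\infw{tr}}$ is bounded by the Richomme--Saari--Zamboni result cited above, \cref{lem:comparison-abb-ad} yields that $\rho^{\add}_{\infw{tr}}$ is bounded as well, verifying hypothesis \cref{it:cond-1} of \cref{thm:effective procedure for addcompl}. For hypothesis \cref{it:cond-2}, I would construct, for each letter $a\in\{0,1,2\}$, a Tribonacci-synchronized predicate computing $|\infw{tr}[0..n-1]|_a$; such predicates are already available in the \texttt{Walnut} Tribonacci toolkit (they underlie the abelian-complexity analysis in \cite{Shallit-2021}). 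By subtraction they produce synchronized predicates \texttt{fac0}, \texttt{fac1}, \texttt{fac2} that compute $|\infw{tr}[i..i+n-1]|_a$ for arbitrary $i,n$.

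Next, additive equivalence is encoded by the single first-order condition $|u|_1+2|u|_2=|v|_1+2|v|_2$, which I would implement in \texttt{Walnut} as a predicate \texttt{addFacEq(i,j,n)} using \texttt{fac1} and \texttt{fac2}. The additive complexity is then obtained, exactly as in \cref{prop:example_Parikh_collinear}, by counting the positions $i$ such that no earlier position $j<i$ is additively equivalent to $i$ at window length $n$. This produces a linear representation for $\rho^{\add}_{\infw{tr}}$ in the Tribonacci system. I would then minimize the representation using the algorithm from \cite{Berstel&Reutenauer:2011} and run the semigroup-trick algorithm of \cite[\S~4.11]{Walnut2}; by \cref{rk: constructive proof vs semi-group trick halts} and the boundedness already established, this algorithm is guaranteed to terminate and to output a Tribonacci DFAO. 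Reading off the output alphabet of the resulting automaton should yield the value set $\{3,4,5\}$, and counting states should yield $76$.

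To finish, I would verify with \texttt{Walnut} that each of the values $3$, $4$, $5$ is attained infinitely often, by evaluating for each $v \in \{3,4,5\}$ the sentence asserting that for every $n$ there exists $m\ge n$ with output $v$; all three evaluations returning \texttt{True} gives the infinitely-often claim. The main obstacle I expect is practical rather than conceptual: the Tribonacci synchronized automata are substantially larger than their base-$k$ analogues, so the intermediate products built in encoding \texttt{addFacEq} and the subsequent ``first-occurrence'' predicate may blow up, making it delicate to coax \texttt{Walnut} through the computation and to minimize the final DFAO down to the claimed $76$ states; the correctness of the scheme itself, however, is guaranteed by \cref{thm:effective procedure for addcompl}.
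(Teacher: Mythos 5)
Your proposal matches the paper's proof essentially step for step: Tribonacci-synchronized prefix and factor counts, an \texttt{addFacEq} predicate testing $|u|_1+2|u|_2=|v|_1+2|v|_2$, a first-occurrence count yielding a linear representation, minimization followed by the semigroup trick to obtain the 76-state DFAO with outputs $\{3,4,5\}$, and \texttt{Walnut} queries confirming each value occurs infinitely often. The approach is correct and is the same as the paper's.
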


\begin{proof}

We reuse some ideas (especially, \verb|Walnut| code) from~\cite{Shallit-2021,Shallit-2023}. The Tribonacci word is stored as \texttt{TRL} in \verb|Walnut|. The synchronized function \texttt{rst} takes the Tribonacci representations of $m$ and $n$ in parallel and accepts if $(n)_T$ is the right shift of $(m)_T$. In \verb|Walnut|, the following three predicates allow us to obtain DFAO's that compute the maps $n\mapsto |\infw{tr}[0..n-1]|_a$ for $a\in\{0,1,2\}$, i.e., the number of letters $0,1,2$ in the length-$n$ prefix of the Tribonacci word $\infw{tr}$. 
Note that the predicates are obtained using a special property of $\infw{tr}$; for a full explanation,  see~\cite[Sec.~3]{Shallit-2021}.

\begin{verbatim}
def tribsync0 "?msd_trib Ea Eb (s=a+b) & ((TRL[n]=@0)=>b=0) 
    & ((TRL[n]=@1)=>b=1) & $rst(n,a)":
def tribsync1 "?msd_trib Ea Eb Ec (s=b+c) & ((TRL[a]=@0)=>c=0) 
    & ((TRL[a]=@1)=>c=1) & $rst(n,a) & $rst(a,b)":
def tribsync2 "?msd_trib Ea Eb Ec Ed (s=c+d) & ((TRL[b]=@0)=>d=0) 
    & ((TRL[b]=@1)=>d=1) & $rst(n,a) & $rst(a,b) & $rst(b,c)":
\end{verbatim} From now on, we follow the same steps as \cref{prop:example_Parikh_collinear}. First, we compute the Tribonacci synchronized functions $n\mapsto |\infw{tr}[i..i+n-1]|_a$ for $a\in\{0,1,2\}$, that are

\begin{verbatim}
def tribFac0 "?msd_trib Aq Ar ($tribsync0(i+n,q) 
    & $tribsync0(i,r)) => (q=r+s)":
def tribFac1 "?msd_trib Aq Ar ($tribsync1(i+n,q) 
    & $tribsync1(i,r)) => (q=r+s)":
def tribFac2 "?msd_trib Aq Ar ($tribsync2(i+n,q) 
    & $tribsync2(i,r)) => (q=r+s)":
\end{verbatim} Next, we compute the additive equivalence between two factors, that is the following Tribonacci synchronized function

\begin{verbatim}
def tribAddFacEq "?msd_trib Ep,q,r,s $tribFac1(i,n,p) & $tribFac2(i,n,q) 
    & $tribFac1(j,n,r) & $tribFac2(j,n,s) & p+2*q=r+2*s":
\end{verbatim} Finally, we obtain a linear representation, as defined at the end of \cref{sec:preliminaries}, of the additive complexity as follows

\begin{verbatim}
eval tribAddCompRepLin n "?msd_trib Aj j<i => ~$tribAddFacEq(i,j,n)":
\end{verbatim} And \verb|Walnut| then returns a linear representation of size $184$. Then we apply the same procedure than in~\cref{prop:example_Parikh_collinear}.

After minimization, the result is a linear representation of rank $62$ and we carry out the semigroup trick.   This algorithm terminates, which proves that the additive complexity of the Tribonacci word is bounded, and takes on only the values $\{1,3,4,5 \}$ for $n \geq 0$. Furthermore, it produces a $76$-state DFAO computing the additive complexity.
In \verb|Walnut|, let us import this DFAO under the name \verb|TAC|.
To show that each value appears infinitely often, we test the following three predicates
\begin{verbatim}
eval tribAddComp_3 "?msd_trib An Em (m>n) & TAC[m]=@3":
eval tribAddComp_4 "?msd_trib An Em (m>n) & TAC[m]=@4":
eval tribAddComp_5 "?msd_trib An Em (m>n) & TAC[m]=@5":
\end{verbatim}
and \verb|Walnut| then returns \verb|TRUE| each time.   
\end{proof}

\begin{figure}[h]
    \centering
    \includegraphics[scale=0.45]{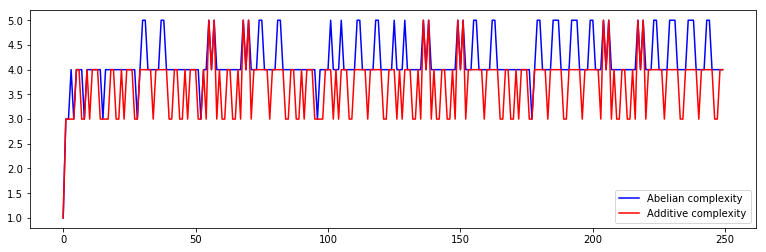}
    \caption{The first few values of the abelian and additive complexities for the Tribonacci word.}
    \label{fig:add-complexity-Trib}
\end{figure}

\begin{remark}
    From the automaton, which is too large to display here, it is easy to find infinite families for each value of the additive complexity function. Indeed, it suffices to detect a loop in the automaton leading to a final state for each value. For instance, we have the following infinite families: 
    \begin{itemize}
        \item[(a)] If $(n)_T=100(100)^k$, for $k\geq 0$, then $\rho^{\add}_{\infw{tr}}(n)=3$. 
        \item[(b)] If $(n)_T=1101(01)^k$, for $k\geq 0$, then $\rho^{\add}_{\infw{tr}}(n)=4$. 
        \item[(c)] If $(n)_T=1101001100(1100)^k$, for $k\geq 0$, then $\rho^{\add}_{\infw{tr}}(n)=5$. 
    \end{itemize}
    One can check with \verb|Walnut| that these infinite families are convenient with the following commands \begin{verbatim}
reg form3 msd_trib "0*100(100)*":
reg form4 msd_trib "0*1101(01)*":
reg form5 msd_trib "0*1101001100(1100)*":
eval check3 "?msd_trib An ($form3(n) & n>=1) => TAC[n]=@3":
eval check4 "?msd_trib An ($form4(n) & n>=1) => TAC[n]=@4":
eval check5 "?msd_trib An ($form5(n) & n>=1) => TAC[n]=@5":
\end{verbatim} which returns \verb|TRUE| for each command. One can also notice that from the automaton, we can build infinitely many infinite families of solutions of each of those values. However, the question about the respective proportion of solutions remains open. 
\end{remark}

\begin{remark}
With \verb|Walnut|, we can also build a DFAO
computing the minimum (resp., maximum) possible sum of a length-$n$ block occurring in $\infw{tr}$.
Furthermore, for each $n$,
every possible sum between these two
extremes actually occurs for some
length-$n$ factor in $\infw{tr}$.
\end{remark} 

\subsubsection{The generalized Thue--Morse word on three letters}

We introduce a family of words over three letters that are closed to a generalization of the Thue--Morse word.

\begin{definition}
\label{def:l-m-TM}
    Let $\ell,m$ be integers such that $1\leq \ell < m$.
    The \emph{$(\ell,m)$-Thue--Morse word} $\infw{t}_{\ell,m}$ is the fixed point of the morphism $0\mapsto 0 \ell m$, $\ell \mapsto \ell m 0$, $m \mapsto m 0 \ell$. 
\end{definition}

In the case where $\ell=1$ and $m=2$, we find the so-called \emph{ternary Thue--Morse word} $\infw{t}_3$, which is the fixed point of the morphism $0\mapsto 012$, $1\mapsto 120$, $2\mapsto 201$. This word is a natural generalization of the ubiquitous Thue--Morse sequence, since it corresponds to the sum-of-digit function in base $3$, taken mod $3$.

\begin{theorem}[{\cite[Thm.~4.1]{KK-2017}}]
\label{thm:ab compl ternary TM}
Consider the ternary Thue--Morse word $\infw{t}_3$, i.e., the fixed point of the morphism $0\mapsto 012$, $1\mapsto 120$, $2\mapsto 201$.
The abelian complexity function $\rho^{\ab}_{\infw{t}_3}$ is the periodic infinite word $13(676)^{\omega}$. 
\end{theorem}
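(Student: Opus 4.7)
The plan is to follow the \verb|Walnut|-based strategy developed throughout~\cref{sec:general results}. First observe that the defining morphism $\varphi \colon 0 \mapsto 012,\ 1 \mapsto 120,\ 2 \mapsto 201$ is Parikh-collinear: indeed $\Psi(\varphi(0)) = \Psi(\varphi(1)) = \Psi(\varphi(2)) = (1,1,1)$, so its adjacency matrix has rank $1$. Consequently,~\cref{cor:bounded add} yields that $\rho^{\ab}_{\infw{t}_3}$ is bounded, while~\cref{thm:Parikh-collinear-automatic-ab} shows that it is $3$-automatic with an effectively computable DFAO, since $\sum_{b \in \{0,1,2\}} |\varphi(b)|_b = 3$.

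The concrete computation mirrors~\cref{prop:example_Parikh_collinear} and~\cref{thm:tribo}. First, introduce $\infw{t}_3$ to \verb|Walnut| as the DFAO associated to $\varphi$ in the \texttt{msd\_3} numeration system. Second, build synchronized predicates for the prefix letter counts $n \mapsto |\infw{t}_3[0..n-1]|_a$, $a \in \{0,1,2\}$, whose existence is guaranteed by~\cite[Lemma~26]{RSW-2024}. Third, derive by subtraction synchronized functions for the arbitrary factor counts $|\infw{t}_3[i..i+n-1]|_a$. Fourth, express abelian equivalence of two length-$n$ factors as pointwise equality of their three-component Parikh vectors, then count those positions $i$ for which $\infw{t}_3[i..i+n-1]$ represents a novel abelian class. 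This produces a linear representation of $\rho^{\ab}_{\infw{t}_3}$, which we minimise and feed into the semigroup trick, thereby obtaining the claimed DFAO.

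To finish, one reads the eventually periodic word $13(676)^{\omega}$ directly off this DFAO. The values at $n=0$ and $n=1$ are verified by hand, and for $n \geq 2$ we split by residue modulo $3$: three \verb|Walnut| assertions involving the regular expressions \texttt{0*(0|1|2)*00}, \texttt{0*(0|1|2)*01}, and \texttt{0*(0|1|2)*02} (together with the appropriate universal quantifier) check that the DFAO outputs $6$, $7$, and $6$ on the three classes. The main obstacle lies in step two: although~\cite[Lemma~26]{RSW-2024} guarantees existence of the synchronized prefix-count automata, writing them down explicitly requires unfolding $\varphi$ into a $3$-uniform rewriting that tracks each letter's position via base-$3$ arithmetic. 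Once these are available, everything else is mechanical, and the boundedness from~\cref{cor:bounded add} ensures that the semigroup trick terminates and delivers the periodic pattern $13(676)^{\omega}$ stated in the theorem.
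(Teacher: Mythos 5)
Your proposal is correct in outline, but it takes a genuinely different route from the paper's treatment of this statement. The paper quotes the result from Kabor\'e--Kient\'ega and then re-proves it combinatorially in \cref{Thue--Morse}: every length-$n$ factor is written as $pf(x)s$ with $p$ a proper suffix and $s$ a proper prefix of images of letters, and the finitely many resulting Parikh vectors are tabulated (\cref{tab:weights3n,tab:weights3n+1,tab:weights3n+2}), giving $7$ abelian classes when $3\mid n$ and $6$ otherwise. That argument needs no software and covers the whole family $\infw{t}_{\ell,m}$ at once, which is what lets the paper also read off the additive complexity for every $(\ell,m)$. Your \texttt{Walnut} computation is instead the method the paper applies to the \emph{additive} complexity of the same word in \cref{thm:ad compl ternary TM}; transplanted to abelian equivalence it does work (Parikh-collinearity gives boundedness via \cref{cor:bounded add} and $3$-automaticity via \cref{thm:Parikh-collinear-automatic-ab}, so the semigroup trick terminates after minimization), and it buys a machine-checkable certificate and an explicit DFAO, but only for this single instance.

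Two slips in your write-up should be fixed. First, since $\varphi$ is already $3$-uniform, no ``unfolding'' and no appeal to \cite[Lemma~26]{RSW-2024} is needed: the length-$3t$ prefix contains exactly $t$ occurrences of each letter, which is exactly what the paper's \texttt{tmgPref} predicates encode by writing $n=3t+r$. Second, your closing regular expressions are wrong: in \texttt{msd\_3} the residue of $n$ modulo $3$ is determined by the \emph{last} digit alone, so the three classes are \texttt{0*(0|1|2)*0}, \texttt{0*(0|1|2)*1} and \texttt{0*(0|1|2)*2}; your versions \texttt{...00}, \texttt{...01}, \texttt{...02} also constrain the penultimate digit and therefore miss most $n$ in each class. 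Moreover the value assignment must be $7$ for $n\equiv 0$, $6$ for $n\equiv 1$ and $6$ for $n\equiv 2 \pmod 3$ (for $n\ge 2$), consistent with $13(676)^{\omega}$, rather than the order $6,7,6$ you pair with residues $0,1,2$.
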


\begin{theorem}
\label{thm:ad compl ternary TM}
Consider the ternary Thue--Morse word $\infw{t}_3$, i.e., the fixed point of the morphism $0\mapsto 012$, $1\mapsto 120$, $2\mapsto 201$.
    The additive complexity function $\rho^{\add}_{\infw{t}_3}$ is the periodic infinite word $135^\omega$.   
\end{theorem}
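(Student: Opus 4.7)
The plan is to follow the Walnut-driven template already used in the proof of \cref{thm:tribo} and \cref{prop:example_Parikh_collinear}. Notice first that the defining morphism $f$ with $0\mapsto 012$, $1\mapsto 120$, $2\mapsto 201$ has identical column Parikh vectors $\Psi(f(a))=(1,1,1)$ for every $a\in\{0,1,2\}$, so $f$ is Parikh-collinear. Hence \cref{pro:Parikh collinear automatic additive complexity} applies with $k=\sum_{b}|f(b)|_b=3$ and yields at once that $\rho^{\add}_{\infw{t}_3}$ is a $3$-automatic sequence whose generating DFAO is effectively computable. (Boundedness of $\rho^{\add}_{\infw{t}_3}$, which ultimately feeds \cref{thm:effective procedure for addcompl}, also follows directly from \cref{lem:comparison-abb-ad} combined with \cref{thm:ab compl ternary TM}.) The remaining task is to exhibit this DFAO in Walnut and verify that the sequence it outputs coincides with $135^{\omega}$.

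To carry this out, I would first load $\infw{t}_3$ into Walnut using its well-known description as the base-$3$ digit-sum taken modulo $3$. By \cite[Lemma~26]{RSW-2024}, the Parikh vectors of the prefixes of $\infw{t}_3$ form a synchronized sequence, so one can build Walnut predicates computing $|\infw{t}_3[0..n-1]|_a$ for each $a\in\{0,1,2\}$; here the construction is particularly clean because all three columns of the adjacency matrix of $f$ coincide. Subtracting the prefix-count synchronizations yields factor-count synchronizations $n\mapsto |\infw{t}_3[i..i+n-1]|_a$, mimicking the \texttt{tribFac0/1/2} blocks of \cref{thm:tribo}. The additive equivalence of two length-$n$ factors $u$ and $v$ is then expressed by $|u|_1+2|u|_2=|v|_1+2|v|_2$, since the letter $0$ contributes $0$ to any weighted sum. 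A single \texttt{eval} command analogous to \texttt{tribAddCompRepLin} then returns a linear representation of $\rho^{\add}_{\infw{t}_3}$.

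After minimisation, I would feed this linear representation to the semigroup trick of \cite[\S~4.11]{Walnut2}; termination is guaranteed by \cref{rk: constructive proof vs semi-group trick halts}, and it outputs a DFAO. To conclude, I would verify with Walnut the three assertions $\rho^{\add}_{\infw{t}_3}(0)=1$, $\rho^{\add}_{\infw{t}_3}(1)=3$, and $\rho^{\add}_{\infw{t}_3}(n)=5$ for every $n\geq 2$; each is a first-order sentence decidable over the $3$-automatic structure, so Walnut returns \verb|TRUE| in each case. The main obstacle I anticipate is not theoretical, since \cref{pro:Parikh collinear automatic additive complexity} already guarantees everything needed; rather it is the usual practical matter of controlling the size blow-up of the intermediate linear representations and automata, and of checking that the resulting DFAO, which should be small because only three distinct values occur, indeed corresponds to the ultimately constant sequence $135^{\omega}$.
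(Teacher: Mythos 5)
Your proposal is correct and follows essentially the same route as the paper's proof: build Walnut synchronizations for the prefix and factor letter counts of $\infw{t}_3$, express additive equivalence of two length-$n$ factors via $|u|_1+2|u|_2=|v|_1+2|v|_2$, extract a linear representation of $\rho^{\add}_{\infw{t}_3}$, minimize it, run the semigroup trick to get a small DFAO, and read off $135^{\omega}$. Your additional upfront observation that the morphism is Parikh-collinear (so \cref{pro:Parikh collinear automatic additive complexity} guarantees $3$-automaticity and termination in advance) is a correct, mild strengthening of the setup rather than a different argument.
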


\begin{proof}
The following \verb|Walnut| provides a linear representation of size $138$ for the additive complexity of $\infw{t}_3$: 

\begin{verbatim}
morphism h "0->012 1->120 2->201": 
promote TMG h:

def tmgPref0 "?msd_3 Er,t n=3*t+r & r<3 & (r=0 => s=t) 
    & ((r=1 & TMG[n-1]=@0) => s=t+1) 
    & ((r=1 & (TMG[n-1]=@1 | TMG[n-1]=@2)) => s=t) 
    & ((r=2 & (TMG[n-1]=@0 | TMG[n-1]=@1)) => s=t+1) 
    & ((r=2 & TMG[n-1]=@2) => s=t)":
def tmgPref1 "?msd_3 Er,t n=3*t+r & r<3 & (r=0 => s=t) 
    & ((r=1 & TMG[n-1]=@1) => s=t+1) 
    & ((r=1 & (TMG[n-1]=@0 | TMG[n-1]=@2)) => s=t) 
    & ((r=2 & (TMG[n-1]=@1 | TMG[n-1]=@2)) => s=t+1) 
    & ((r=2 & TMG[n-1]=@0) => s=t)":
def tmgPref2 "?msd_3 Eq,r $tmgPref0(n,q) & $tmgPref1(n,r) & q+r+s=n":

def tmgFac0 "?msd_3 Et,u $tmgPref0(i+n,t) & $tmgPref0(i,u) & s+u=t":
def tmgFac1 "?msd_3 Et,u $tmgPref1(i+n,t) & $tmgPref1(i,u) & s+u=t":
def tmgFac2 "?msd_3 Et,u $tmgPref2(i+n,t) & $tmgPref2(i,u) & s+u=t":

def tmgAddFacEq "?msd_3 Ep,q,r,s $tmgFac1(i,n,p) & $tmgFac2(i,n,q) 
    & $tmgFac1(j,n,r) & $tmgFac2(j,n,s) & p+2*q=r+2*s":

eval tmgAddCompRepLin n "?msd_3 Aj j<i => ~$tmgAddFacEq(i,j,n)":
\end{verbatim} 

The end of the proof is the same as for~\cref{thm:tribo}. The size of the minimal linear representation is $13$ and the semigroup trick algorithm terminates and produces the $3$-state DFAO of~\Cref{fig:automaton-AddTM}. The result follows immediately. 
\begin{figure}[h] 
\begin{center}
\begin{tikzpicture}
\tikzstyle{every node}=[shape=circle,fill=none,draw=black,minimum size=30pt,inner sep=2pt]
\node(a) at (0,0) {$q_0/1$};
\node(b) at (3,0) {$q_1/3$};
\node(c) at (6,0) {$q_2/5$};

\tikzstyle{every node}=[shape=circle,fill=none,draw=none,minimum size=10pt,inner sep=2pt]
\node(a0) at (-1,0) {};

\tikzstyle{every path}=[color =black, line width = 0.5 pt]
\tikzstyle{every node}=[shape=rectangle,minimum size=5pt,inner sep=2pt]
\draw [->] (a0) to [] node [] {}  (a);

\draw [->] (a) to [loop above] node [] {$0$}  (a);
\draw [->] (a) to [] node [above] {$1$}  (b);
\draw [->] (a) to [bend right] node [above] {$2$}  (c);

\draw [->] (b) to [] node [above] {$0,1,2$}  (c);

\draw [->] (c) to [loop above] node [] {$0,1,2$}  (c);
\end{tikzpicture}
\caption{A DFAO computing the additive complexity of the $(1,2)$-Thue--Morse word.}
\label{fig:automaton-AddTM}
\end{center}
\end{figure}
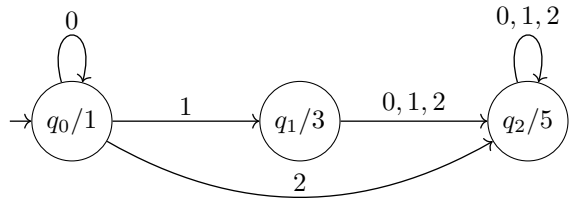
\end{proof} 

Changing the letters $\ell$ and $m$ does not modify the abelian complexity, so for all $1\leq\ell<m$, we have $\rho^{\ab}_{\infw{t}_{\ell,m}}=\rho^{\ab}_{\infw{t}_3}$. However, additive complexity might change over a different alphabet. In the particular case where $\ell=1$ and $m=2$, the following gives an alternative proof of~\cref{thm:ad compl ternary TM} with only combinatorial tools.
Note that the statement on $\rho^{\ab}_{\infw{t}_{\ell,m}}$ was also proven in \cite{KK-2017}, but we provide here a simpler and more concise proof. 

\begin{theorem} \label{Thue--Morse}
Let $\ell,m$ be integers such that $1\leq \ell < m$.
Consider the $(\ell,m)$-Thue--Morse word, i.e., the fixed point of the morphism $0\mapsto 0 \ell m$, $\ell \mapsto \ell m 0$, $m \mapsto m 0 \ell$. 
Then its abelian complexity satisfies $\rho^{\ab}_{\infw{t}_{\ell,m}}=136(766)^{\omega}$ and its additive complexity satisfies $\rho^{\add}_{\infw{t}_{\ell,m}} = \rho^{\ab}_{\infw{t}_{\ell,m}}$ if $m\neq 2\ell $, and $\rho^{\add}_{\infw{t}_{\ell,m}}=135^{\omega}$ if $m=2\ell$.
\end{theorem}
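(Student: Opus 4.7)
The proof hinges on the observation that the morphism $\tau_{\ell,m}\colon 0\mapsto 0\ell m,\ \ell\mapsto \ell m 0,\ m\mapsto m 0 \ell$ is \emph{Parikh-constant}: each image has Parikh vector $(1,1,1)$. The plan is to first establish a structural description of the Parikh vectors of length-$n$ factors, then use a coordinate-range bound on their differences to separate the two additive complexity regimes.

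Any length-$n$ factor $w$ of $\infw{t}_{\ell,m}$ starting at position $3a+r$ decomposes as a length-$(3-r)$ suffix of $\tau_{\ell,m}(c_a)$ (writing $c_i$ for the $i$th letter of $\infw{t}_{\ell,m}$), followed by $K\ge 0$ full images, followed by a length-$s$ prefix of $\tau_{\ell,m}(c_{a+K+1})$, with $r,s\in\{0,1,2\}$ and $n=(3-r)+3K+s$. Thus
\[
\Psi(w)=S(c_a,3-r)+K\,(1,1,1)+P(c_{a+K+1},s),
\]
where $S(c,k)$ and $P(c,k)$ denote the Parikh vectors of the length-$k$ suffix and prefix of $\tau_{\ell,m}(c)$, respectively. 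Inspection of the three images shows every entry of $S$ and $P$ lies in $\{0,1\}$. The abelian complexity statement then follows either by a direct enumeration of the finitely many shapes $S(c_a,3-r)+P(c_{a+K+1},s)$ (many of which coincide modulo $(1,1,1)$), or more economically by noting that $\rho^{\ab}$ is invariant under permutations of the alphabet, giving $\rho^{\ab}_{\infw{t}_{\ell,m}}=\rho^{\ab}_{\infw{t}_3}=136(766)^\omega$ from \cref{thm:ab compl ternary TM}.

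The additive analysis rests on the following coordinate-range bound. For any two length-$n$ factors $u,v$,
\[
\Psi(u)-\Psi(v)=A+\alpha\,(1,1,1),
\]
with $A=(S_u-S_v)+(P_u-P_v)\in\{-2,-1,0,1,2\}^3$ and $\alpha=K_u-K_v\in\mathbb{Z}$. Adding a multiple of $(1,1,1)$ shifts every coordinate equally and hence preserves the \emph{coordinate range} of the vector (maximum entry minus minimum entry). Consequently $\max_i(\Psi(u)-\Psi(v))_i-\min_i(\Psi(u)-\Psi(v))_i\le 4$.

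Two length-$n$ factors are additively equivalent but abelian inequivalent iff $\Psi(u)-\Psi(v)$ is a nonzero integer multiple of the primitive kernel vector $\bigl((\ell-m)/d,\,m/d,\,-\ell/d\bigr)$, where $d=\gcd(\ell,m)$. A short case analysis shows this vector has coordinate range $(m+\max(m-\ell,\ell))/d$, which equals exactly $3$ when $m=2\ell$ (with $d=\ell$), and is at least $5$ for every other pair $\ell<m$ (subcase $m>2\ell$: use $m/d\ge 2(\ell/d)+1$ since $\gcd(m/d,\ell/d)=1$; subcase $\ell<m<2\ell$: use $\ell/d\ge 2$ and $m/d\ge 3$). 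Multiples with $|t|\ge 2$ only enlarge the range. Combining this with the bound $4$ above: when $m\ne 2\ell$, no nonzero multiple of the kernel vector is realizable as a difference of Parikh vectors, so no collapse occurs and $\rho^{\add}_{\infw{t}_{\ell,m}}=\rho^{\ab}_{\infw{t}_{\ell,m}}=136(766)^\omega$; when $m=2\ell$, the additive equivalence condition $\ell x_\ell+m x_m=\ell y_\ell+m y_m$ factors through division by $\ell$ as the additive equivalence condition over $\{0,1,2\}^*$, yielding $\rho^{\add}_{\infw{t}_{\ell,2\ell}}=\rho^{\add}_{\infw{t}_3}=135^\omega$ by \cref{thm:ad compl ternary TM}. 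The main obstacle is the clean verification that the kernel-vector range exceeds $4$ in all $m\ne 2\ell$ subcases, which amounts to an arithmetic inequality on $m/d$ and $\ell/d$.
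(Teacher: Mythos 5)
Your proof is correct, but it follows a genuinely different route from the paper's. The paper works by brute-force tabulation: it writes each factor as $p f(x) s$ with $p,s$ proper suffix/prefix pieces, lists every possible Parikh vector in three tables (one per residue of $n$ mod $3$), reads off the abelian counts $7,6,6$, and then writes the weights of the abelian classes as $(n-1)(\ell+m)+\delta$ and checks by explicit chains of inequalities that the values of $\delta$ are pairwise distinct exactly when $m\neq 2\ell$ (collapsing to $5$ values when $m=2\ell$). You replace this with a more conceptual argument: a difference of Parikh vectors of equal-length factors has coordinate range at most $4$ (since the boundary contributions lie in $\{0,1\}^3$ and the bulk contributes a multiple of $(1,1,1)$), while any nonzero integer vector that is simultaneously zero-sum and weight-zero is a multiple of $\bigl((\ell-m)/d,\,m/d,\,-\ell/d\bigr)$ and has coordinate range at least $5$ unless $m=2\ell$ — so for $m\neq 2\ell$ additive and abelian equivalence coincide on factors. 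I checked the two arithmetic subcases ($m>2\ell$ gives range $2m/d-\ell/d\ge 3\ell/d+2\ge 5$; $\ell<m<2\ell$ forces $\ell/d\ge 2$, $m/d\ge 3$, hence range $\ge 5$) and the primitivity of the kernel generator; both are right. What your approach buys is brevity and a clear structural reason for the dichotomy at $m=2\ell$; what it gives up is self-containedness: you import the abelian complexity of $\infw{t}_3$ from \cref{thm:ab compl ternary TM} and, for the $m=2\ell$ branch, the additive complexity $135^\omega$ from \cref{thm:ad compl ternary TM}, which the paper establishes only via \texttt{Walnut}. This is not circular, but note that the paper explicitly advertises this theorem as providing a purely combinatorial alternative proof of \cref{thm:ad compl ternary TM}, a purpose your version would not serve unless you substitute the ``direct enumeration'' option you mention (in which case you would also need to confirm that each enumerated boundary combination is actually realized by a factor, to get the exact counts rather than just upper bounds).
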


\begin{proof}
We clearly have $\rho^{\add}_{\infw{t}_{\ell,m}}(0)=1$, $\rho^{\add}_{\infw{t}_{\ell,m}}(1)=3$,  and $\rho^{\add}_{\infw{t}_{\ell,m}}(2)$ is equal to $5$ or $6$ depending on whether $m=2\ell$ or not.
We examine length-$n$ factors of $\infw{t}_{\ell,m}$ for $n\ge 2$.
Each such factor can be written as $y=pf(x)s$ where $f$ is the morphism $0\mapsto 0 \ell m$, $\ell \mapsto \ell m 0$, $m \mapsto m 0 \ell$ of~\cref{def:l-m-TM} and $p$ (resp., $s$) is a proper suffix (resp., prefix) of an image $f(a)$  for $a\in\{0,\ell,m\}$.
In particular, note that $p,s\in\{\varepsilon,0,\ell,m,0\ell, \ell m, m0\}$.
In the following, we examine the \emph{weight} of $y$, which is the quantity $0\cdot |y|_0+\ell\cdot |y|_\ell+m\cdot |y|_m$.
More precisely, we count how many different weights $y$ can have, which in turn gives the number of different additive equivalence classes.

First assume that $|y|=3n$ for some $n\ge 1$.
Then we have two cases depending on whether $p,s$ are empty or not. If $p=s=\varepsilon$, then $|x|=n$ and this case corresponds to the first line of~\cref{tab:weights3n}. Otherwise, $|x|=n-1$ and $|ps|=3$. In that case, since the roles of $p$ and $s$ are symmetric when computing the weight of the factor, all the possible cases are depicted in~\cref{tab:weights3n}. 
\begin{table}[h]
    \centering
    \begin{tabular}{c|c|c|c|c|c}
     $p$ & $s$ & $|y|_0$ & $|y|_{\ell}$ & $|y|_{m}$ & $0\cdot |y|_0+\ell\cdot |y|_\ell+m\cdot |y|_m$ \\\hline \hline 
     $\varepsilon$ & $\varepsilon$ & $n$ & $n$ & $n$ & $\ell n + mn$ \\ \hline 
     $0$ & $0\ell$ & $n+1$ & $n$ & $n-1$ & $\ell n + m(n-1)$ \\ \hline 
     $0$ & $\ell m$ & $n$ & $n$ & $n$ & $\ell n + mn$ \\ \hline 
     $0$ & $m 0$ & $n+1$ & $n-1$ & $n$ & $\ell (n+1) + m(n-1)$ \\ \hline 
     $\ell$ & $0\ell $ & $n$ & $n+1$ & $n-1$ & $\ell n + m(n+1)$ \\ \hline 
     $\ell$ & $\ell m$ & $n-1$ & $n+1$ & $n$ & $\ell (n-1) + mn$ \\ \hline 
     $\ell$ & $m 0$ & $n$ & $n$ & $n$ & $\ell n + mn$ \\ \hline 
     $m$ & $0\ell$ & $n$ & $n$ & $n$ & $\ell n + mn$ \\ \hline 
     $m$ & $\ell m$ & $n-1$ & $n$ & $n+1$ & $\ell n + m(n+1)$ \\ \hline 
     $m$ & $m0$ & $n$ & $n-1$ & $n+1$ & $\ell (n-1) + m(n+1)$ \\ 
\end{tabular}
    \caption{The possible weights of factors of the $(\ell,m)$-Thue--Morse word $\infw{t}_{\ell,m}$ of the form $y=pf(x)s$ where $|y|=3n$ for some $n\ge 1$.}
    \label{tab:weights3n}
\end{table}
From the third, fourth and fifth columns of the table, we observe that there are seven different abelian classes (only the class where $|y|_0=|y|_{\ell}=|y|_m=n$ appears more than once) and this proves that $\rho^{\ab}_{\infw{t}_{\ell,m}}(3n)=7$. The corresponding weights of these seven abelian classes can be written as $(n-1)\cdot(\ell+m) + \delta$ with $\delta \in \{\ell, m, 2\ell, \ell+m, 2m, 2\ell+m, \ell+2m\}$. Since \begin{align*}
    \ell < \min\{m,2\ell\} \le &\max\{m,2\ell\} < \ell +m \\ &<\min\{2m,2\ell+m\} \le \max\{2m,2\ell+m\} < \ell+2m, 
    \end{align*} this now proves that $\rho^{\add}_{\infw{t}_{\ell,m}}(3n)$ is equal to $7$ if $m\neq 2\ell $, and to $5$ otherwise.

% Since the proof of the remaining cases $|y|=3n+1$ and $|y|=3n+2$ are very similar, they are included in the \Cref{Appendix:Thue--Morse}. Finally, we obtain that $\rho^{\add}_{\infw{t}_{\ell,m}}(3n+1)$ and $\rho^{\add}_{\infw{t}_{\ell,m}}(3n+2)$ are equal to $6$ if $m\neq 2\ell $, and to $5$ otherwise.

%The proof of the remaining cases $|y|=3n+1$ and $|y|=3n+2$ are very similar and are left to the reader. 
Assume that $|y|=3n+1$ for some $n\ge 1$. With similar reasoning and up to the symmetry between $p$ and $s$, we list all cases in~\cref{tab:weights3n+1}.
\begin{table}[h]
    \centering
    \begin{tabular}{c|c|c|c|c|c}
     $p$ & $s$ & $|y|_0$ & $|y|_{\ell}$ & $|y|_{m}$ & $0\cdot |y|_0+\ell\cdot |y|_\ell+m\cdot |y|_m$ \\\hline \hline 
     $0$ & $\varepsilon$ & $n+1$ & $n$ & $n$ & $\ell n + mn$ \\ \hline 
     $\ell$ & $\varepsilon$ & $n$ & $n+1$ & $n$ & $\ell (n+1) + mn$ \\ \hline 
     $m$ & $\varepsilon$ & $n$ & $n$ & $n+1$ & $\ell n + m(n+1)$ \\ \hline 
     $0\ell$ & $0\ell$ & $n+1$ & $n+1$ & $n-1$ & $\ell (n+1) + m(n-1)$ \\ \hline 
     $0\ell$ & $\ell m$ & $n$ & $n+1$ & $n$ & $\ell (n+1) + mn$ \\ \hline 
     $0\ell$ & $m0$ & $n+1$ & $n$ & $n$ & $\ell n + mn$ \\ \hline 
     $\ell m$ & $\ell m$ & $n-1$ & $n+1$ & $n+1$ & $\ell (n-1) + m(n+1)$ \\ \hline 
     $\ell m$ & $m0$ & $n$ & $n$ & $n+1$ & $\ell n + m(n+1)$ \\ \hline 
     $m0$ & $m0$ & $n+1$ & $n-1$ & $n+1$ & $\ell (n-1) + m(n+1)$ \\ 
\end{tabular}
    \caption{The possible weights of factors of the $(\ell,m)$-Thue--Morse word $\infw{t}_{\ell,m}$ of the form $y=pf(x)s$ where $|y|=3n+1$ for some $n\ge 1$.}
    \label{tab:weights3n+1}
\end{table}
Therefore, we see that there are six different abelian classes, which proves that $\rho^{\ab}_{\infw{t}_{\ell,m}}(3n+1)=6$. The corresponding weights of these six abelian classes can be written as $(n-1)\cdot(\ell+m) + \delta$ with $\delta \in \{2\ell, \ell+m, 2m, 2\ell+m, \ell+2m,2\ell+2m\}$. Since
\[
2\ell < \ell+m < \min\{2\ell+m,2m\} \le \max\{2\ell+m,2m\} < \ell+2m <2\ell+2m,
\]
we have that $\rho^{\add}_{\infw{t}_{\ell,m}}(3n+1)$ is equal to $6$ if $m\neq 2\ell $, and to $5$ otherwise.

Finally, assume that $|y|=3n+2$ for some $n\ge 1$. Up to the symmetry between $p$ and $s$, we list all cases in~\cref{tab:weights3n+2}.
\begin{table}[h]
    \centering
    \begin{tabular}{c|c|c|c|c|c}
     $p$ & $s$ & $|y|_0$ & $|y|_{\ell}$ & $|y|_{m}$ & $0\cdot |y|_0+\ell\cdot |y|_\ell+m\cdot |y|_m$ \\\hline \hline 
     $0$ & $0$ & $n+2$ & $n$ & $n$ & $\ell n + mn$ \\ \hline 
     $0$ & $\ell$ & $n+1$ & $n+1$ & $n$ & $\ell (n+1) + mn$ \\ \hline 
     $0$ & $m$ & $n+1$ & $n$ & $n+1$ & $\ell n + m(n+1)$ \\ \hline 
     $\ell$ & $\ell$ & $n$ & $n+2$ & $n$ & $\ell (n+2) + mn$ \\ \hline 
     $\ell$ & $m$ & $n$ & $n+1$ & $n+1$ & $\ell (n+1) + m(n+1)$ \\ \hline 
     $m$ & $m$ & $n$ & $n$ & $n+2$ & $\ell n + m(n+2)$ \\ \hline 
     $0\ell $ & $\varepsilon$ & $n+1$ & $n+1$ & $n$ & $\ell (n+1) + mn$ \\ \hline 
     $\ell m$ & $\varepsilon$ & $n$ & $n+1$ & $n+1$ & $\ell (n+1) + m(n+1)$ \\ \hline 
     $m0$ & $\varepsilon$ & $n+1$ & $n+1$ & $n$ & $\ell (n+1) + mn$ \\ 
\end{tabular}
    \caption{The possible weights of factors of the $(\ell,m)$-Thue--Morse word $\infw{t}_{\ell,m}$ of the form $y=pf(x)s$ where $|y|=3n+2$ for some $n\ge 0$.}
    \label{tab:weights3n+2}
\end{table}
Once again, we see that there are six different abelian classes, so $\rho^{\ab}_{\infw{t}_{\ell,m}}(3n+2)=6$. The corresponding weights of these six abelian classes can be written as $n\cdot(\ell+m) + \delta$ with $\delta \in \{0,\ell,m,2\ell,\ell+m, 2m\}$. Since 
\[ 0<\ell < \min\{2\ell,m\} \le \max\{2\ell,m\} < \ell +m <2m,\] 
we have that $\rho^{\add}_{\infw{t}_{\ell,m}}(3n+2)$ is equal to $6$ if $m\neq 2\ell $, and to $5$ otherwise.
\end{proof}

\subsection{Bounded additive and unbounded abelian complexities: a variant of the Thue--Morse word}
\label{sec:bounded add and unbounded ab}

Thue introduced a variation of his sequence that is sometimes called the \emph{ternary squarefree Thue--Morse word}, and abbreviated as $\infw{vtm}$ (the letter ``v'' stands for ``variant''). It is the sequence~\cite[A036577] {Sloane} in the OEIS;  for more on the word $\infw{vtm}$, see~\cite{Berstel1979}.

\begin{definition}[Variant of Thue--Morse]
    We let $\infw{vtm}$ be the fixed point of $f:0\mapsto 012, 1\mapsto 02, 2\mapsto 1$, starting with $0$.
\end{definition}

The abelian complexity of the variant of the Thue--Morse word is unbounded. 

\begin{theorem}[{\cite[Cor.~1]{BSCRF-2014}}]
    Let $\infw{vtm}$ be the fixed point of $f:0\mapsto 012, 1\mapsto 02, 2\mapsto 1$, starting with $0$.
    Its abelian complexity is $O(\log n)$ with constant approaching $3/4$ (assuming base-$2$ logarithm), and it is $\Omega (1)$ with constant $3$.
\end{theorem}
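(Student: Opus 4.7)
The plan is to follow the approach of Blanchet-Sadri, Currie, Rampersad, and Fox, exploiting the self-similar structure of $\infw{vtm}$ under the morphism $f\colon 0\mapsto 012,\ 1\mapsto 02,\ 2\mapsto 1$. The starting observation is that, by the telescoping identity
\[
\Psi(\infw{vtm}[i..i+n-1]) = \Psi(\infw{vtm}[0..i+n-1]) - \Psi(\infw{vtm}[0..i-1]),
\]
the abelian complexity at length $n$ is controlled by the number of distinct Parikh vectors arising as differences of Parikh vectors of prefixes of $\infw{vtm}$. So it suffices to understand the set of Parikh vectors of prefixes together with an explicit offset.

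For the upper bound $O(\log n)$, the idea is desubstitution. Any prefix of $\infw{vtm}$ of length $n$ can be written as $f(\text{prefix of length }m)\cdot r$, where $r$ is a proper prefix of some image $f(a)$ (hence belongs to a finite set) and $m$ is roughly $n$ divided by the average image length. This gives the recursion $\Psi(\text{prefix}_n) = M\cdot \Psi(\text{prefix}_m) + \Psi(r)$, where $M$ is the incidence matrix of $f$. Iterating $k = O(\log n)$ times, one obtains an expression of the form $\Psi(\text{prefix}_n) = \sum_{j=0}^{k} M^{j}\,v_j$, with each correction $v_j$ drawn from a fixed finite set of vectors (the Parikh vectors of proper prefixes of the $f(a)$'s). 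Since the number of reachable sums $\sum M^j v_j$ of depth $k$ with bounded choice per level is polynomial in $k$, but many collisions occur because $M$ is singular (already $\det M = 0$), a careful enumeration of the admissible sequences of boundary pieces yields the bound $O(\log n)$ on the number of Parikh vectors of prefixes, with an explicit leading constant approaching $3/4 \cdot \log_2 n$.

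For the lower bound $\Omega(1)$ with constant $3$, it suffices to exhibit, for each sufficiently large $n$, three factors of length $n$ whose Parikh vectors are pairwise distinct. A natural construction is to choose three shifted occurrences inside a large iterate $f^k(012)$: for well-chosen shifts the resulting Parikh vectors differ by $\Psi(f^k(a)) - \Psi(f^k(b))$ for distinct letters $a,b$, and a direct calculation using the eigenvectors of $M$ shows these differences are nonzero.

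The main obstacle is to pin down the constant $3/4$, which requires delicate bookkeeping: one must identify precisely which integer linear combinations of boundary corrections can be realized by admissible desubstitution sequences (only certain words $w\in\{0,1,2\}^*$ actually occur as prefixes of prefixes of $\infw{vtm}$), and must quantify the cancellations forced by the kernel of $M$ so as to count distinct vectors rather than distinct formal expressions. The asymptotic lower bound $\Omega(1) = 3$, by contrast, is a finite combinatorial verification once three suitable families of factors are exhibited.
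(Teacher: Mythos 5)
First, note that the paper does not prove this statement at all: it is quoted verbatim from \cite{BSCRF-2014}, so there is no internal proof to match your argument against; your proposal has to stand on its own, and as written it does not. The decisive gap is in the upper bound. After desubstituting prefixes you arrive at expressions $\sum_{j\le k} M^{j}v_j$ with $k=O(\log n)$ and $v_j$ from a finite set, but the number of such sums is a priori exponential in $k$, not ``polynomial in $k$'' as you assert; the entire content of the theorem --- that collisions force the count down to roughly $\tfrac34\log_2 n$ --- is precisely the ``delicate bookkeeping'' you explicitly defer, so nothing quantitative is actually established, neither the constant $3/4$ nor even $O(\log n)$. Moreover, the reduction itself is too coarse: for a fixed length $n$ the abelian complexity is the number of distinct differences $\Psi(\infw{vtm}[0..i+n-1])-\Psi(\infw{vtm}[0..i-1])$ as $i$ varies, so you must control the discrepancy of $i\mapsto\Psi(\infw{vtm}[0..i-1])$ from $\tfrac{i}{3}(1,1,1)$ uniformly in $i$ (this is where the eigenvalue of modulus $1$ of the incidence matrix produces the logarithmic growth); desubstituting a single prefix and counting prefix Parikh vectors does not bound the number of these differences. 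In \cite{BSCRF-2014} this is handled by an exact recursive description of the abelian complexity in terms of the binary expansion of $n$, from which the asymptotic constant $3/4$ is read off --- that computation, or an equivalent one, is what is missing here.

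The lower bound sketch is also flawed as stated: since $f$ is non-uniform ($|f(0)|=3$, $|f(1)|=2$, $|f(2)|=1$), the words $f^k(a)$ and $f^k(b)$ for distinct letters have different lengths, so differences $\Psi(f^k(a))-\Psi(f^k(b))$ do not arise as differences of Parikh vectors of same-length factors, which is what abelian complexity compares. A correct elementary route, consistent with the paper's own analysis of $\infw{vtm}$ (proof of its Theorem~\ref{thm:vtm}): every factor $x$ satisfies $|x|_2-|x|_0=c$ with $c\in\{-1,0,1\}$, and distinct values of $c$ force distinct Parikh vectors; it then suffices to check that all three values of $c$ are realized by length-$n$ factors for every $n\ge 2$, which gives $\rho^{\ab}_{\infw{vtm}}(n)\ge 3$. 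As it stands, your text is a plausible plan in the spirit of the cited work, but both halves stop short of the arguments that make the statement true.
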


However, we prove that the additive complexity of the word $\infw{vtm}$ is bounded.

\begin{theorem} \label{thm:vtm}
 Let $\infw{vtm}$ be the fixed point of $f:0\mapsto 012, 1\mapsto 02, 2\mapsto 1$, starting with $0$.
 Its additive complexity is the periodic infinite word $13^\omega$.
\end{theorem}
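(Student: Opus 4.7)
The plan is to transcribe, with minor adjustments, the Walnut-based template used in \cref{thm:tribo} and \cref{thm:ad compl ternary TM}. Although $\infw{vtm}$ is generated by a non-uniform morphism, it is $2$-automatic, so it can be introduced into Walnut as the image, under a coding, of the fixed point of a suitable $2$-uniform morphism.

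Once $\infw{vtm}$ is in Walnut, I would build synchronized automata computing $n\mapsto |\infw{vtm}[0..n-1]|_a$ for $a\in\{0,1,2\}$, then their factor analogues $n\mapsto |\infw{vtm}[i..i+n-1]|_a$ via subtraction. The additive-equivalence predicate encodes $|u|_1+2|u|_2=|v|_1+2|v|_2$, and a first-occurrence predicate produces a linear representation of $\rho^{\add}_{\infw{vtm}}$. After minimization, the semigroup trick should terminate and output a very small DFAO---most likely with essentially two useful states, outputting $1$ at $0$ and $3$ everywhere else---whose shape gives exactly $13^\omega$. A concluding Walnut predicate checks that the DFAO equals $3$ on all $n\ge 1$.

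A purely combinatorial shortcut also seems available. Since each of $f(0)=012$, $f(1)=02$, $f(2)=1$ has total weight equal to its length, we have $w(f(v))=|f(v)|$ for every word $v$. Writing a prefix $u$ of $\infw{vtm}$ as $f(v)t$ with $t$ a proper prefix of some $f(a)$, one obtains $w(u)-|u|=w(t)-|t|\in\{-1,0\}$. Hence the partial sum $S(n):=\sum_{i=0}^{n-1}\infw{vtm}(i)$ satisfies $S(n)-n\in\{-1,0\}$, and the weight of any length-$n$ factor is $S(i+n)-S(i)\in\{n-1,n,n+1\}$, giving the upper bound $\rho^{\add}_{\infw{vtm}}(n)\le 3$.

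The main obstacle in the combinatorial route is showing that all three values $n-1$, $n$, $n+1$ are actually attained for every $n\ge 1$, so that the inequality becomes an equality. This reduces to finding, for each $n$, starting indices realizing opposite deviation profiles $(S(i)-i,\,S(i+n)-(i+n))=(0,-1)$ and $(-1,0)$, which can be handled by a self-similarity argument using iteration of $f$. The Walnut route simply reads the same three-valued range off the DFAO and hence bypasses this step, which is why I would favor it, keeping the combinatorial observation only as independent confirmation that the semigroup trick must halt.
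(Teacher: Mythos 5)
Your ``combinatorial shortcut'' is, in substance, the paper's actual proof: the paper writes each length-$n$ factor as $x=pf(y)s$ with $p\in\{\eps,12,2\}$, $s\in\{\eps,0,01\}$, and uses $|f(y)|_0=|f(y)|_2$; your observation that every image $f(a)$ has weight equal to its length is the same fact in prefix-sum form, and both yield that every length-$n$ factor has weight in $\{n-1,n,n+1\}$, i.e.\ $\rho^{\add}_{\infw{vtm}}(n)\le 3$. The attainment of all three values, which you single out as the remaining obstacle, is treated as immediate in the paper as well, so you are not behind on that point --- but you should finish it by hand (e.g.\ a short self-similarity or cut-point argument, as you sketch) rather than defer it to \texttt{Walnut}, because the \texttt{Walnut} route you favor is not available for this word.

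Concretely, the template of \cref{thm:tribo} and \cref{thm:ad compl ternary TM} hinges on the prefix-counting functions $n\mapsto|\infw{vtm}[0..n-1]|_a$, $a\in\{0,1,2\}$, being synchronized, and for $\infw{vtm}$ they are not. If they were, then the factor-count functions $(i,n)\mapsto|\infw{vtm}[i..i+n-1]|_a$ would be synchronized, hence so would the spread $D_a(n)=\max_i|\infw{vtm}[i..i+n-1]|_a-\min_i|\infw{vtm}[i..i+n-1]|_a$ (the attained counts form an interval of integers, and max/min are first-order definable). But $D_a(n)<\rho^{\ab}_{\infw{vtm}}(n)=O(\log n)$ is $o(n)$, and a synchronized sequence that is $o(n)$ must be $O(1)$ (see the growth results for synchronized sequences in~\cite{Walnut2}); bounded spreads for all letters would force $\rho^{\ab}_{\infw{vtm}}$ to be bounded, contradicting its unboundedness~\cite{BSCRF-2014}. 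This is precisely why hypothesis~\cref{it:cond-2} of \cref{thm:effective procedure for addcompl} fails here and why the paper proves \cref{thm:vtm} combinatorially rather than with \texttt{Walnut}. Note also that without those synchronized counts you cannot even produce the linear representation that the semigroup trick takes as input (that the additive complexity of a $2$-automatic word is $2$-regular is exactly the open \cref{conj:ab and add kregular}); a \texttt{Walnut} verification would instead have to synchronize the prefix weight $W(n)=\sum_{i<n}\infw{vtm}(i)$ directly, which already presupposes the bound $W(n)-n\in\{-1,0\}$, i.e.\ the very combinatorial fact above. So promote your combinatorial argument to the main proof and complete the lower bound there.
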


\begin{proof}
    Let $n\ge 1$ and $x\in \mathcal{L}_n(\infw{vtm})$. Let us prove that $\sum_{a=0}^2 a\cdot |x|_a  \in\{n-1,n,n+1\}$.
    Write $x=pf(y)s$ where $p$ (resp., $s$) is a proper suffix (resp., prefix) of an image $f(a)$, $a\in\{0,1,2\}$. Then we have $p\in \{\eps,12,2\}$ and $s\in \{\eps,0,01\}$. 
    By definition of the morphism $f$, observe that $|f(y)|_2=|f(y)|_0$.
    Therefore, depending on the words $p$ and $s$, $|x|_2=|x|_0+c$ with $c\in\{-1,0,1\}$, which suffices since $|x|_0+|x|_1+|x|_2=n$.   
\end{proof}

Therefore, the word $\infw{vtm}$ has unbounded abelian complexity and  bounded additive complexity; also see~\cref{fig:add-complexity-VTM}.
In particular,~\cite[Lemma~3]{RSZ-2011} implies that $\infw{vtm}$ cannot be balanced, so there exist non-balanced infinite words with bounded additive complexity.
Another example exhibiting the same behavior for its abelian and additive complexity is given in~\cite{ABJS-2012}.

\begin{figure}[h]
    \centering
    \includegraphics[scale=0.45]{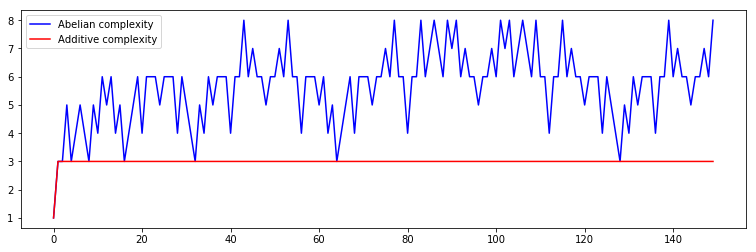}
    \caption{The first few values of the abelian and additive complexities for the variant of the Thue--Morse word.}
    \label{fig:add-complexity-VTM}
\end{figure}

\subsection{Unbounded additive and abelian complexities}

In this short section, we exhibit a word such that both its additive and abelian complexities are both unbounded. 

\begin{theorem}[{\cite[Thm.~1 and Cor.~1]{CWW-2019}}]
    Let $\infw{x}$ be the fixed point of the Thue--Morse-like morphism $0 \mapsto 01$, $1 \mapsto 12$, $2 \mapsto 20$.
    Then $\rho^{\add}_{\infw{x}}(n) = 2\lfloor \log_2 n \rfloor + 3$ for all $n\ge 1$.
    In particular, the sequence $(\rho^{\add}_{\infw{x}}(n))_{n\ge 0}$ is $2$-regular. 
\end{theorem}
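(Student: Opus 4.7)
The plan is to follow the Walnut-based strategy developed earlier in the paper, and to extract the explicit closed form by a separate combinatorial analysis. Since the additive complexity is unbounded here, \cref{thm:effective procedure for addcompl} does not directly apply; but $\infw{x}$ is a fixed point of a $2$-uniform morphism, hence $2$-automatic, so the framework of synchronized predicates for prefix Parikh counts, factor Parikh counts, and additive equivalence carries over essentially without change. The Walnut computation yields a linear representation of $\rho^{\add}_{\infw{x}}$, and by construction this linear representation defines a $2$-regular sequence, which already settles the second assertion.

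To pin down the exact formula $\rho^{\add}_{\infw{x}}(n) = 2 \lfloor \log_2 n \rfloor + 3$, I would analyze the set of achievable weights
\[
W_n := \{ |u|_1 + 2|u|_2 \, : \, u \in \mathcal{L}_n(\infw{x})\}.
\]
Writing each length-$n$ factor as $u = p \, f(y) \, s$ with $|p|, |s| \leq 1$, and noting that the weight of $f(w)$ equals $|w|_0 + 3|w|_1 + 2|w|_2$, a $\Z$-linear functional of the Parikh vector of $w$, one can relate $W_n$ to $W_{\lceil n/2 \rceil}$. An induction on the binary length of $n$ should then show that $\min W_n$ and $\max W_n$ each increase by exactly $1$ when $n$ doubles, which produces the $2 \lfloor \log_2 n \rfloor$ growth term; the additive constant $3$ is fixed by inspecting the small cases $n = 1$ and $n = 2$, where the weights realized are $\{0,1,2\}$ and $\{1,2,3\}$ respectively.

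The remaining ingredient is to verify that $W_n$ is a full interval of consecutive integers, so that $\rho^{\add}_{\infw{x}}(n) = \max W_n - \min W_n + 1$. This gap-freeness is a purely combinatorial statement about which length-$n$ factors of $\infw{x}$ actually occur, and I expect this to be the main obstacle: for each intermediate integer weight, one must exhibit a factor of $\infw{x}$ realizing it, presumably via a local swap argument based on the three morphism-image blocks $01$, $12$, $20$ and the way they interlock in $\infw{x}$. A cleaner finish, once $2$-regularity is in hand, would be to observe that the function $n \mapsto 2 \lfloor \log_2 n \rfloor + 3$ is itself $2$-regular with an obvious small linear representation, and then to verify equality of the two $2$-regular sequences by minimising both linear representations and comparing them rank by rank, thus sidestepping the gap-freeness argument entirely.
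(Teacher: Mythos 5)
First, note that the paper does not prove this statement at all: it is imported verbatim from \cite{CWW-2019}, so your proposal has to stand on its own, and as it stands it has two genuine gaps. The first is the claim that the Walnut framework used elsewhere in the paper ``carries over essentially without change'' and yields a linear representation, so that $2$-regularity holds ``by construction.'' The machinery of \cref{thm:effective procedure for addcompl} (and of the Tribonacci and ternary Thue--Morse computations) rests on two hypotheses that both fail here: the additive complexity must be bounded for the semigroup trick to terminate, and it is unbounded for this word; and the prefix Parikh counts must be synchronized, which they are not. Indeed $\infw{x}(n)=s_2(n)\bmod 3$, and the discrepancy $\bigl|3\,|\infw{x}[0..n-1]|_a-n\bigr|$ is unbounded (it fluctuates on the order of $\log n$ along suitable $n$), while a $2$-synchronized letter-counting function would force this discrepancy, being a synchronized function that is $o(n)$, to be bounded by the growth dichotomy for synchronized sequences. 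So you do not actually obtain a linear representation this way, and the $2$-regularity claim is unsupported; in the cited source (and in the paper's reading of it) regularity is simply an immediate consequence of the closed form, since $n\mapsto 2\lfloor\log_2 n\rfloor+3$ is visibly $2$-regular.

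The second gap is that the combinatorial half, which is where the entire content of the theorem lies, is only a plan. The proposed recursion does not close over the weight sets $W_n$: the weight of $f(y)$ is $|y|_0+3|y|_1+2|y|_2=|y|+2|y|_1+|y|_2$, which is \emph{not} determined by $|y|$ and the weight $|y|_1+2|y|_2$ of $y$, so $W_n$ cannot be computed from $W_{\lceil n/2\rceil}$ alone; one must track a second additive functional of half-length factors (essentially abelian information), and this is exactly the work you have deferred. The two key statements --- the exact extremal deviations growing by one unit per doubling, and the gap-freeness of $W_n$ --- \emph{are} the theorem, and you explicitly leave both as expectations. Your base data is also off: $W_2=\{0,1,2,3,4\}$ (the factors $00$ and $22$ occur in $\infw{x}=011212202001\cdots$), not $\{1,2,3\}$, consistent with $\rho^{\add}_{\infw{x}}(2)=5$, so the constant cannot be ``fixed by inspecting $n=1,2$'' as written. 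Finally, even granting a correct linear representation, ``minimising both linear representations and comparing them rank by rank'' is not a proof of equality of two regular sequences; one needs an actual equivalence test, e.g.\ agreement on all representations of length at most the sum of the two ranks --- but this point is moot while the first linear representation is missing.
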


% \begin{figure}
%     \centering
%     \includegraphics[scale=0.45]{}
%     \caption{The first few values of the abelian and additive complexities for the fixed point $\infw{w}$ of the morphism $0 \mapsto 01$, $1 \mapsto 12$, $2 \mapsto 20$.}
%     \label{fig:abad-comp-TMlike}
% \end{figure}

Recall that, for an integer $k\ge 1$, a word $w$ is an \emph{abelian $k$-power} if we can write $w=x_1 x_2 \cdots x_k$ where each $x_i$, $i\in\{1,\ldots,k\}$, is a permutation of $x_1$.
For instance, $\texttt{reap}\cdot \texttt{pear}$ and $\texttt{de}\cdot\texttt{ed}\cdot\texttt{ed}$ are respectively an abelian square and cube in English.
Similarly, $w$ is an \emph{additive $k$-power} if we can write $w=x_1 x_2 \cdots x_k$ with $|x_i|=|x_1|$ for all $i\in\{1,\ldots,k\}$ and $x_1 \sim_{\add} x_2 \sim_{\add} \cdots \sim_{\add} x_k$.
The length of each $x_i$, $i\in\{1,\ldots,k\}$, is called the \emph{order} of $w$. As mentioned in the introduction, the following result is one of the main results known on additive complexity. 

\begin{theorem}[{\cite[Thm.~2.2]{ABJS-2012}}]
\label{thm:bounded-additive-complexity-kpower}
    Let $\infw{x}$ be an infinite word over a finite subset of $\Z$.
    If $\rho^{\add}_{\infw{x}}$ is bounded, then $\infw{x}$ contains an additive $k$-power for every positive integer $k$.
\end{theorem}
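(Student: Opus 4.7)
The plan is to translate the theorem into a statement about the partial-sum sequence and then apply van der Waerden's theorem. Write $S(n) = \sum_{i=0}^{n-1}\infw{x}(i)$ with $S(0)=0$, and for each $n\geq 1$ set $T_n(i) = S(i+n)-S(i)$, the weight of the length-$n$ factor of $\infw{x}$ starting at position $i$. The hypothesis that $\rho^{\add}_{\infw{x}}$ is bounded by some constant $C$ is equivalent to saying that for every $n\geq 1$ the set $V_n := \{T_n(i) : i\geq 0\}$ has at most $C$ elements. Moreover, $\infw{x}$ contains an additive $k$-power of order $n$ at position $i$ if and only if $T_n(i) = T_n(i+n) = \cdots = T_n(i+(k-1)n)$, equivalently if and only if $S(i), S(i+n), \ldots, S(i+kn)$ is a $(k+1)$-term arithmetic progression in $\Z$.

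Next, I would introduce the \emph{block sequence} $\mathbf{b}_n := (T_n(jn))_{j\geq 0}$, an infinite word over the finite alphabet $V_n$ of size at most $C$, whose $j$-th letter is the weight of the $j$-th non-overlapping length-$n$ block of $\infw{x}$. The occurrence of a $k$-th power $a^k$ at position $j_0$ of $\mathbf{b}_n$ is then exactly an additive $k$-power of order $n$ in $\infw{x}$ starting at position $j_0 \cdot n$. Hence the problem reduces to exhibiting, for every $k$, some $n$ for which $\mathbf{b}_n$ contains a factor of the form $a^k$.

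To achieve this I would combine van der Waerden's theorem with an inflation step. For any fixed $n$, van der Waerden's theorem applied to a sufficiently long prefix of $\mathbf{b}_n$ (coloured by at most $C$ colours) yields a monochromatic arithmetic progression of length $k$: positions $j_0, j_0+e, \ldots, j_0+(k-1)e$ at which the common value $v \in V_n$ is attained. Rescaling to $n' = en$, each length-$n'$ superblock of $\infw{x}$ decomposes into $e$ consecutive length-$n$ blocks, and one of these already carries the prescribed weight $v$; I would then invoke bounded additive complexity at the intermediate scales to control the remaining ``filler'' contributions and iterate the procedure until the $k$ consecutive superblocks become additively equivalent, yielding a genuine additive $k$-power in $\infw{x}$.

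The main obstacle is precisely this last step: closing the gap between monochromatic APs of arbitrary common difference (given by van der Waerden) and the constant-spaced APs required for an actual additive $k$-power in $\infw{x}$. I expect this will require a delicate compactness/diagonalization argument over the family $(\mathbf{b}_n)_{n\geq 1}$, exploiting the fact that all these block sequences arise from a \emph{single} underlying word $\infw{x}$ of bounded additive complexity, in order to rule out the pathological scenario in which each individual $\mathbf{b}_n$ resembles a $k$-th-power-free word over a small alphabet (such as Thue--Morse in the cube-free binary case).
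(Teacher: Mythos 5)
This statement is quoted from \cite[Thm.~2.2]{ABJS-2012} and the paper gives no proof of its own, so I can only measure your proposal against the argument in that reference, which is indeed a van der Waerden argument on the partial sums --- you have found the right tool and the right reformulation (additive $k$-powers of order $n$ at position $i$ correspond to $(k+1)$-term arithmetic progressions $S(i),S(i+n),\ldots,S(i+kn)$). However, there is a genuine gap, and it is exactly the one you flag yourself: applying van der Waerden \emph{separately to each block sequence} $\mathbf{b}_n$ only produces a monochromatic progression of blocks with common difference $e$, and no amount of ``inflation and iteration'' at scales $en, e^2n,\ldots$ is known to convert this into $k$ \emph{consecutive} equal-weight blocks; your own Thue--Morse example shows that a single $\mathbf{b}_n$ can be $k$-power-free, and the proposed ``compactness/diagonalization over the family $(\mathbf{b}_n)$'' is not an argument but a placeholder for the actual missing idea.

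The missing idea is to apply van der Waerden to a single finite colouring of the \emph{positions} $i\in\N$, not to the block sequences. Concretely: (i) since $T_n(i+1)-T_n(i)=\infw{x}(i+n)-\infw{x}(i)$ is bounded and $T_n$ takes at most $C$ values, the set $V_n$ has diameter at most $M:=(C-1)D$ where $D$ is the diameter of the alphabet; as $S(n)=T_n(0)$ and $S(m+n)-S(m)=T_n(m)$ both lie in $V_n$, this gives the quasi-additivity $|S(m+n)-S(m)-S(n)|\le M$; (ii) by the standard quasimorphism/Fekete argument this yields a real $\alpha$ with $|S(i)-\alpha i|\le M$ for all $i$; (iii) colour each $i$ by $\lfloor 4\left(S(i)-\alpha i\right)\rfloor$, which uses finitely many colours, and apply van der Waerden to get a monochromatic progression $i, i+n, \ldots, i+kn$; then each of the $k$ consecutive block sums $S(i+jn)-S(i+(j-1)n)$ is an integer within $1/2$ of every other one, hence they are all equal, giving an additive $k$-power of order $n$. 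Note that here the common difference $n$ produced by van der Waerden \emph{is} the order of the power, so the mismatch you worried about never arises. Your step (i)-replacement --- using bounded additive complexity only to bound $\#V_n$ for each $n$ in isolation --- discards precisely the uniform control across scales (quasi-additivity of $S$) that makes the colouring of positions finite and the argument close.
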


\begin{proposition}
    Let $\infw{w}$ be the fixed point of the morphism $0 \mapsto 03$, $1 \mapsto 43$, $3 \mapsto 1$, $4 \mapsto 01$.
    Then $\rho^{\add}_{\infw{w}}$ is unbounded.
\end{proposition}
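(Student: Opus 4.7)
The plan is to apply \cref{thm:bounded-additive-complexity-kpower} in contrapositive form. The key input is the result of Cassaigne, Currie, Schaeffer, and Shallit~\cite{CCSS2014}, already recalled in the introduction of the paper, which states precisely that the fixed point of the morphism $0\mapsto 03$, $1\mapsto 43$, $3\mapsto 1$, $4\mapsto 01$ avoids additive cubes. In the terminology fixed just before \cref{thm:bounded-additive-complexity-kpower}, this says that $\infw{w}$ contains no additive $3$-power.

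Since $\infw{w}$ is an infinite word over the finite subset $\{0,1,3,4\}\subseteq\mathbb{Z}$, the hypothesis of \cref{thm:bounded-additive-complexity-kpower} is met. The conclusion of that theorem asserts that bounded $\rho^{\add}_{\infw{w}}$ would force $\infw{w}$ to contain an additive $k$-power for \emph{every} $k\ge 1$; in particular, for $k=3$. Contraposing and specializing to $k=3$, the absence of an additive cube in $\infw{w}$ immediately rules out boundedness of $\rho^{\add}_{\infw{w}}$.

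I do not foresee any serious obstacle: the argument collapses to a one-line deduction chaining a previously established combinatorial fact about $\infw{w}$ with a theorem already stated in the paper. The only detail worth double-checking while writing up is that the notion of ``additive cube'' used in \cite{CCSS2014} agrees with the notion of ``additive $3$-power'' defined just above \cref{thm:bounded-additive-complexity-kpower} (i.e., a factorization $x_1 x_2 x_3$ with $|x_1|=|x_2|=|x_3|$ and $x_1\sim_{\add} x_2 \sim_{\add} x_3$), which it does. No appeal to \texttt{Walnut} or to the finer machinery of \cref{sec:general results} is needed here.
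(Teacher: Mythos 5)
Your argument is exactly the paper's proof: it cites the additive-cube-freeness result of \cite{CCSS2014} and applies the contrapositive of \cref{thm:bounded-additive-complexity-kpower} with $k=3$. The proposal is correct and takes essentially the same approach as the paper.
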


\begin{proof}
    In~\cite{CCSS2014} it is shown that $\infw{w}$ is additive-cube-free.
    The result then follows from~\cref{thm:bounded-additive-complexity-kpower}.    
\end{proof}

However, for the latter word $\infw{w}$, it seems interesting to study $\rho^{\ab}_{\infw{w}}-\rho^{\add}_{\infw{w}}$, since these two complexities are very close. Indeed, surprisingly the first time these two complexities are different appears at $n=23$, as the two factors $11011031430110343430314$ and $30310110110314303434303$ are additively but not abelian equivalent. Also, notice that every additive square of the word $\infw{w}$ is an abelian square~\cite[Theorem 5.1]{CCSS2014}. Together with the fact that this word is additive-cube-free, it shows that abelian and additive properties of this word are relatively close. Indeed, \cref{fig:abad-comp-CCSS} illustrates that the values of the difference between the additive and abelian complexity functions is close to $0$. This motivates the study of the next section. 
    
\begin{figure}[h]
    \centering
    \includegraphics[scale=0.45]{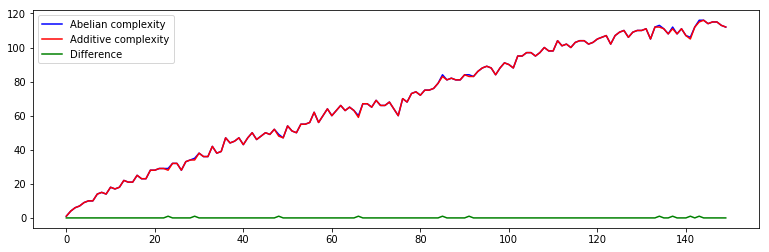}
    \caption{The first few values of the abelian and additive complexities as well as their difference for the fixed point of the morphism $0 \mapsto 03$, $1 \mapsto 43$, $3 \mapsto 1$, $4 \mapsto 01$.}
    \label{fig:abad-comp-CCSS}
\end{figure}

\section{Equality between abelian and additive complexities}
\label{sec:equality between add and ab}

It is clear that abelian complexity does not depend on the values of the alphabet, in contrast with additive complexity. A map $v:A\rightarrow \mathbb{N}$ is called a \emph{valuation} over an alphabet $A$. One might consider the following question.

\begin{question}
Given an alphabet $A$, is there a valuation such that the additive complexity of a given sequence is equal to its abelian complexity?
\end{question}

For instance for the word $\infw{vtm}$, defined originally over the alphabet $\{0,1,2\}$, we have already proved in~\Cref{thm:vtm} that $\rho^{\add}_{\infw{vtm}}(n)=3$ for all $n \geq 1$. However, over the alphabet $\{0,1,3\}$, i.e.,  changing $2$ into $3$, (resp., $\{0,1,4\}$), one can easily check that the first time that the additive and abelian complexities are not equal is for $n=11$ (resp., $n=43$). But, over the alphabet $\{0,1,5\}$, we have observed that both complexities are equal up to $n=50000$. The main idea is that if the value of a letter is sufficiently large compared to the other values, then two additively equivalent factors are also abelian equivalent. Using this idea, we prove the following theorem. 

\begin{theorem}
\label{thm:vtm-egality-add-ab}
Consider the fixed point $\infw{vtm}_{\lambda}$ of the morphism $f_{\lambda} :0\mapsto 01\lambda, 1\mapsto 0\lambda, \lambda\mapsto 1$, where $\lambda$ is a non-negative integer and $\lambda\geq 2$. For all $\lambda \geq 5$,  we have $\rho^{\add}_{\infw{vtm}_\lambda}(n)=\rho^{\ab}_{\infw{vtm}_\lambda}(n)$. 
\end{theorem}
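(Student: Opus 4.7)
The plan is to show, under the hypothesis $\lambda \geq 5$, that every pair of additively equivalent length-$n$ factors of $\infw{vtm}_\lambda$ is in fact abelian equivalent. Since Lemma~\ref{lem:comparison-abb-ad} already yields the inequality in one direction, this reverse containment immediately gives the desired equality of complexity functions.

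The key structural observation is that $|f_\lambda(a)|_0 = |f_\lambda(a)|_\lambda$ for every letter $a\in\{0,1,\lambda\}$: indeed $f_\lambda(0)=01\lambda$ and $f_\lambda(1)=0\lambda$ each contribute one $0$ and one $\lambda$, while $f_\lambda(\lambda)=1$ contributes neither. Consequently $|f_\lambda(w)|_0 = |f_\lambda(w)|_\lambda$ for every word $w$. Now, mimicking the desubstitution argument used in the proof of Theorem~\ref{thm:vtm}, any factor $u$ of $\infw{vtm}_\lambda$ can be written as $u=p\cdot f_\lambda(w)\cdot s$ where $p$ (resp., $s$) is a proper suffix (resp., prefix) of some image, so that $p\in\{\varepsilon,\lambda,1\lambda\}$ and $s\in\{\varepsilon,0,01\}$. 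A direct inspection of these six cases gives $|p|_0-|p|_\lambda\in\{-1,0\}$ and $|s|_0-|s|_\lambda\in\{0,1\}$, and thus $|u|_0-|u|_\lambda\in\{-1,0,1\}$ for every factor $u$ of $\infw{vtm}_\lambda$ (the very short factors being checked directly).

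From here the proof is purely arithmetic. For a length-$n$ factor $u$, set $c_u:=|u|_\lambda-|u|_0\in\{-1,0,1\}$. Combining $|u|_0+|u|_1+|u|_\lambda=n$ with $|u|_\lambda=|u|_0+c_u$ yields
\[
|u|_1+\lambda\,|u|_\lambda \;=\; n+(\lambda-2)\,|u|_0+(\lambda-1)\,c_u,
\]
and the analogous identity holds for a second length-$n$ factor $v$. The relation $u\sim_{\add}v$ then rewrites as
\[
(\lambda-2)(|u|_0-|v|_0)\;=\;(\lambda-1)(c_v-c_u).
\]
Since $\gcd(\lambda-2,\lambda-1)=1$, the integer $\lambda-2$ must divide $c_v-c_u$; but $|c_v-c_u|\leq 2$ while $\lambda-2\geq 3$ whenever $\lambda\geq 5$, which forces $c_u=c_v$. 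We then deduce $|u|_0=|v|_0$, hence $|u|_\lambda=|v|_\lambda$, and finally $|u|_1=|v|_1$ from the length constraint, so $u\sim_{\ab}v$.

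The main obstacle is the structural bound $|u|_0-|u|_\lambda\in\{-1,0,1\}$: although its verification reduces to a short case analysis, it crucially depends on the letter-by-letter identity $|f_\lambda(a)|_0=|f_\lambda(a)|_\lambda$, which is particular to this morphism. The threshold $\lambda\geq 5$ enters only through the divisibility step, where one needs $\lambda-2>2$ to rule out a nonzero value of $c_v-c_u$; this fits with the empirical observation from the discussion preceding the theorem that the statement already fails at $n=11$ for $\lambda=3$ and at $n=43$ for $\lambda=4$.
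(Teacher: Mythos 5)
Your proof is correct and takes essentially the same approach as the paper's: decompose each factor as $p\,f_\lambda(w)\,s$, use $|f_\lambda(w)|_0=|f_\lambda(w)|_\lambda$ to get $|u|_\lambda=|u|_0+c_u$ with $c_u\in\{-1,0,1\}$, and then exploit $\lambda-2\geq 3$ to force the Parikh vectors to coincide. The only (harmless) difference is that you eliminate $|u|_\lambda$ rather than $|u|_0$, arriving at $(\lambda-2)(|u|_0-|v|_0)=(\lambda-1)(c_v-c_u)$ and needing the extra observation $\gcd(\lambda-2,\lambda-1)=1$, whereas the paper's rearrangement yields $(\lambda-2)(|x|_\lambda-|y|_\lambda)=c_y-c_x$ directly.
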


\begin{proof}
    By~\cref{lem:comparison-abb-ad}, it is sufficient to prove that two factors are additively equivalent if and only if they are abelian equivalent. 

    Let $x,y \in \mathcal{L}_n(\infw{vtm}_{\lambda})$ such that $x\sim_{\add}y$. Write $x=pf_\lambda(x')s$ where $p$ (resp., $s$) is a proper suffix (resp., prefix) of an image $f_\lambda(a)$ with $a\in\{0,1,\lambda\}$. Observe that $p\in \{\eps,\lambda,1\lambda\}$ and $s\in \{\eps,0,01\}$. 
    Also, by definition of the morphism $f_\lambda$, we have $|f_\lambda(x')|_\lambda=|f_\lambda(x')|_0$.
    Therefore, depending on the words $p$ and $s$, we have $|x|_\lambda=|x|_0+c_x$ for some $c_x\in\{-1,0,1\}$. In a similar way, we have $|y|_\lambda=|y|_0+c_y$ for some $c_y\in\{-1,0,1\}$. 
    By the assumption that $x\sim_{\add}y$, we have $0|x|_0+1|x|_1+\lambda|x|_\lambda=0|y|_0+1|y|_1+\lambda|y|_\lambda$.
    From the previous observations, we may write this equality as \[|x|_0+|x|_1+|x|_\lambda+(\lambda-2)|x|_\lambda+c_x=|y|_0+|y|_1+|y|_\lambda+(\lambda-2)|y|_\lambda+c_y.\] Since $|x|_0+|x|_1+|x|_\lambda=n=|y|_0+|y|_1+|y|_\lambda$, we have $(\lambda-2)(|x|_\lambda-|y|_\lambda)=c_y-c_x$.
    However, $c_y-c_x \in \{-2,\ldots,2\}$ implies that $|x|_\lambda=|y|_\lambda$ and $c_y=c_x$,  since $\lambda-2\geq 3$. Thus, $|x|_0=|y|_0$.
    Since $|x|=n=|y|$, we also have $|x|_1=|y|_1$, and then that $x$ and $y$ are abelian equivalent. 
    This ends the proof.
\end{proof}

For $C$-balanced words over an alphabet of fixed size $k$, we prove that it is always possible to find a valuation for the alphabet such that the additive complexity is the same as the abelian complexity. 

\begin{theorem}\label{thm:balanced_AB=AD}
Let $k,C\ge 1$ be two integers.
There exists an alphabet $\Sigma \subset \mathbb{N}$ of size $k$ such that, for each $C$-balanced word $\infw{w}$ over $\Sigma$, we have $\rho^{\add}_{\infw{w}}=\rho^{\ab}_{\infw{w}}$. 
\end{theorem}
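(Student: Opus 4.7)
The plan is to exhibit an explicit alphabet whose letter values grow fast enough that no small perturbation of a Parikh vector can cancel out in the weighted sum. Concretely, we take $\Sigma = \{a_1 < \cdots < a_k\}$ with $a_i := (C+1)^{i-1}$ for $i = 1, \ldots, k$, and show that this choice works. By \cref{lem:comparison-abb-ad} we already have $\rho^{\add}_{\infw{w}} \le \rho^{\ab}_{\infw{w}}$, so it will be enough to prove that on any $C$-balanced word $\infw{w}$ over $\Sigma$, additive equivalence refines into abelian equivalence on equal-length factors: once this is established, each additive equivalence class collapses to a single abelian class and the two counts agree.

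To this end, given factors $u, v \in \mathcal{L}_n(\infw{w})$ with $u \sim_{\add} v$, I would set $d_i := |u|_{a_i} - |v|_{a_i}$ for $i = 1, \ldots, k$. The $C$-balanced hypothesis yields $|d_i| \le C$, while the relation $u \sim_{\add} v$ rewrites as $\sum_{i=1}^k a_i d_i = 0$. The crux is then a base-$(C+1)$ digit-uniqueness argument: if some $d_i$ were nonzero, picking the largest such index $j$, isolating the top term, and bounding the tail by a geometric sum would yield
\[
(C+1)^{j-1} \le |a_j d_j| = \biggl|\sum_{i=1}^{j-1} a_i d_i\biggr| \le C\sum_{i=0}^{j-2}(C+1)^i = (C+1)^{j-1} - 1,
\]
a contradiction (also valid when $j=1$, where the empty sum is $0$). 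Hence every $d_i$ vanishes, so $u$ and $v$ share the same Parikh vector.

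There is essentially no real obstacle here: the whole argument reduces to the fact that the per-letter ``budget'' $C$ granted by balancedness is strictly smaller than the chosen base $C+1$, so no nontrivial signed digit vector bounded by $C$ in each coordinate can sum to zero. Natural follow-up questions that this approach does not address, and that one could investigate separately, are to minimize the size of the largest letter of $\Sigma$ (our choice $(C+1)^{k-1}$ is surely far from optimal), or to characterize intrinsically those alphabets for which $\rho^{\add}_{\infw{w}} = \rho^{\ab}_{\infw{w}}$ holds on every $C$-balanced word.
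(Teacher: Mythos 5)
Your proof is correct and follows essentially the same strategy as the paper's: choose letter values growing fast enough that the per-letter balancedness budget $C$ cannot cancel the weighted contribution of the highest letter, then conclude coordinate by coordinate. The paper states the growth condition abstractly as $(a_1+\cdots+a_{j-1})C<a_j$ (with $a_1=0$, $a_2=1$) and peels off letters by downward induction, whereas you instantiate $a_i=(C+1)^{i-1}$ (which satisfies that condition) and argue by contradiction at the largest nonzero coordinate; the two arguments are interchangeable.
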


\begin{proof}
    Such as in the proof of~\cref{thm:vtm-egality-add-ab}, we prove that over the alphabet $\Sigma$, two additively equivalent same-length factors of $\infw{w}$ are also abelian equivalent. Let $\Sigma=\{a_1,\ldots,a_k\}$ be a subset of $\N$ such that
\begin{align}\label{eq:cond on the letters}
    a_1=0, a_2=1
\quad \text{and} \quad
(a_1+\cdots+a_{j-1})C<a_j \quad \text{for all } 2\leq j \leq k.
\end{align}
    Now take $x,y \in \mathcal{L}_n(\infw{w})$ with $x\sim_{\add}y$. This condition can be rewritten as
    \begin{align} \label{eq:firsteq}
        a_1(|x|_{a_1}-|y|_{a_1})+\cdots+a_{k-1}(|x|_{a_{k-1}}-|y|_{a_{k-1}})=a_k(|y|_{a_k}-|x|_{a_k}).
    \end{align}
    Observe that the balancedness of $\infw{w}$ together with Inequalities~\eqref{eq:cond on the letters} imply that the left-hand side of Equality~\eqref{eq:firsteq} belongs to the set $\{-a_k +1, \ldots, a_k-1\}$.
    Since the right-hand side of Equality~\eqref{eq:firsteq} is a multiple of $a_k$, we must have
    \begin{equation}\label{eq:seceq}
    \left\{
    \begin{array}{l}
    |x|_{a_k}=|y|_{a_k}, \\
    a_1(|x|_{a_1}-|y|_{a_1})+\cdots+a_{k-1}(|x|_{a_{k-1}}-|y|_{a_{k-1}})=0.
    \end{array}
    \right.
    \end{equation}
    Using similar reasoning, replacing Equality~\eqref{eq:firsteq} with Equalities~\eqref{eq:seceq}, we deduce that $|x|_{a_{k-1}}=|y|_{a_{k-1}}$.
    Continuing in this fashion, we prove that $|x|_a=|y|_a$ for every $a\in \Sigma$, which is enough.   
\end{proof}

\begin{remark} Since the Tribonacci word $\infw{tr}$ is $2$-balanced, \Cref{thm:balanced_AB=AD} implies that over the alphabet $\{0,1,3\}$, its additive complexity is equal to its abelian complexity. 
\end{remark}

% \begin{remark}
%     \pierre{Find a counterexample of the precedent theorem: non balanced words ? Example of characteristic sequence of powers of 2 ?}

%     \pierre{TM like ? This word is not balanced since its abelian complexity is not bounded.} 
%     \pierre{Non-balanced word dans [Imbalances in Arnoux-Rauzy sequences]?}
% \end{remark}

% \begin{problem}
%     Find a more general class of words than balanced words having their additive complexity equal to their abelian complexity. 
% \end{problem}

% \pierre{spectrum ?}

\section{Abelian and additive powers}

In~\cite{Fici-etal-2016a,Fici-etal-2016b}, abelian powers of Sturmian words were examined.
In particular, the following result was obtained for the Fibonacci word $\infw{f}=010010100100101001010 \cdots$, which is the fixed point of the morphism $0\mapsto 01, 1\mapsto 0$.
Also, see the sequence~\cite[A336487]{Sloane} in the OEIS.

\begin{proposition}[{\cite{Fici-etal-2016a,Fici-etal-2016b}}]
\label{pro:Fici's criterion}
Let $k\ge 1$ be an integer and consider the Fibonacci word $\infw{f}$, i.e., the fixed point of the morphism $0\mapsto 01, 1\mapsto 0$. Then $\infw{f}$ has an abelian $k$-power of order $n$ if and only if $\lfloor k \varphi n\rfloor \equiv \modd{0,-1} {k}$, where $\varphi = \frac{1+\sqrt{5}}{2}$ is the golden ratio.
\end{proposition}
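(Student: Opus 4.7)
My plan is to exploit the Sturmian structure of the Fibonacci word. Writing $\alpha:=1/\varphi$, the standard formula $\infw{f}(m)=\lfloor(m+2)/\varphi^2\rfloor-\lfloor(m+1)/\varphi^2\rfloor$ combined with the identity $1/\varphi+1/\varphi^2=1$ yields $|\infw{f}[0..m-1]|_0=\lfloor(m+1)\alpha\rfloor$. Setting $\eta:=\{n\alpha\}$ and, for $j\ge 0$, $\theta_j:=\{(j+1)\alpha\}$, a short manipulation with fractional parts gives
\[
|\infw{f}[j..j+n-1]|_0 \;=\; \lfloor n\alpha\rfloor+\lfloor\theta_j+\eta\rfloor,
\]
which equals $\lfloor n\alpha\rfloor$ when $\theta_j\in I_0:=[0,1-\eta)$ and $\lfloor n\alpha\rfloor+1$ when $\theta_j\in I_1:=[1-\eta,1)$. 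Since the alphabet is binary, an abelian $k$-power of order $n$ at position $j$ is exactly a factor whose $k$ consecutive length-$n$ blocks contain the same number of $0$'s, which amounts to requiring that $\theta_{j+in}=\{\theta_j+i\eta\}$ lie in the same arc ($I_0$ or $I_1$) for every $i=0,1,\ldots,k-1$.

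Next I would rephrase the existence question on the circle $\mathbb{R}/\mathbb{Z}$: does one of the intersections $\bigcap_{i=0}^{k-1}(I_b-i\eta)$, $b\in\{0,1\}$, meet the orbit $\{\theta_j:j\ge 0\}$? Since $\alpha$ is irrational, Weyl equidistribution makes this orbit dense in $[0,1)$, so the question reduces to whether one of the intersections is non-empty. I expect the main obstacle to be this geometric step. For $b=0$, the complement of the intersection is the union $\bigcup_{i=0}^{k-1}[1-(i+1)\eta,1-i\eta)\pmod{1}$ of $k$ consecutive arcs of length $\eta$ that, on the real line, tile the arc $[1-k\eta,1)$. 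When $k\eta<1$, this arc lies inside $[0,1)$, so the intersection is the complementary arc $[0,1-k\eta)$; when $k\eta\ge 1$, the arc has length $\ge 1$ and its reduction mod $1$ covers the whole circle, so the intersection is empty. By the symmetric argument obtained via the substitution $\eta\mapsto 1-\eta$ (which swaps the roles of $I_0$ and $I_1$ up to translation), $\bigcap_{i=0}^{k-1}(I_1-i\eta)$ is non-empty iff $k(1-\eta)<1$, i.e., $\eta>(k-1)/k$. Hence $\infw{f}$ has an abelian $k$-power of order $n$ iff $k\eta<1$ or $\eta\ge(k-1)/k$, freely switching strict and non-strict inequalities since $\eta$ is irrational.

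Finally, I would translate this back into the stated congruence. Using $\varphi=1+1/\varphi$ and $n\alpha=\lfloor n\alpha\rfloor+\eta$, one computes
\[
\lfloor k\varphi n\rfloor=kn+\lfloor kn\alpha\rfloor\equiv\lfloor k\eta\rfloor\pmod{k},
\]
and the residue $\lfloor k\eta\rfloor\in\{0,1,\ldots,k-1\}$ equals $0$ precisely when $k\eta<1$ and equals $k-1$ precisely when $\eta\ge(k-1)/k$. These are exactly the two cases producing an abelian $k$-power, yielding the desired equivalence $\lfloor k\varphi n\rfloor\equiv 0,-1\pmod{k}$.
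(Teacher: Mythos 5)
The paper itself offers no proof of this proposition --- it is imported verbatim from Fici et al.\ --- so there is no internal argument to compare against; judged on its own, your proof is correct, and it essentially reconstructs the rotation-word argument that underlies the cited source. The key steps all check out: the prefix formula $|\infw{f}[0..m-1]|_0=\lfloor (m+1)\alpha\rfloor$ with $\alpha=1/\varphi$ is correct (it follows from $\infw{f}(m)=\lfloor(m+2)/\varphi^2\rfloor-\lfloor(m+1)/\varphi^2\rfloor$ and $1/\varphi+1/\varphi^2=1$); the block count $\lfloor n\alpha\rfloor+\lfloor\theta_j+\eta\rfloor$ and the identity $\theta_{j+in}=\{\theta_j+i\eta\}$ are correct; the circle computation correctly identifies $\bigcap_{i}(I_0-i\eta)$ as $[0,1-k\eta)$ when $k\eta<1$ and as empty otherwise, with the symmetric statement for $I_1$; and the final translation $\lfloor k\varphi n\rfloor\equiv\lfloor k\eta\rfloor\pmod{k}$, with $\lfloor k\eta\rfloor\in\{0,\dots,k-1\}$ equal to $0$ or $k-1$ exactly in the two favourable cases, is exactly what is needed. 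Two small points deserve an explicit sentence in a polished write-up: (i) to pass from ``the intersection is non-empty'' to ``it meets the orbit $\{\theta_j\}_{j\ge 0}$,'' note that a non-empty intersection of half-open arcs $[a,b)$ contains a non-degenerate right-neighbourhood $[x,x+\varepsilon)$ of any of its points, hence a non-empty open set, which the orbit (dense by irrationality of $\alpha$) must meet; (ii) the irrationality of $\eta=\{n\alpha\}$ for $n\ge 1$ is what licenses the free interchange of strict and non-strict inequalities at $\eta=1/k$ and $\eta=(k-1)/k$. Neither is a gap, just a place to be explicit.
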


% \begin{theorem}
%     Let $k\ge 1$ be in integer.
%     The orders of abelian $k$-powers in the Fibonacci word $\infw{f}$ are recognized by a Fibonacci DFA of $O(k^2)$ states.
% \end{theorem}
% \begin{proof}
%  We assemble the following two results together with~\cref{pro:Fici's criterion}:
%  \begin{itemize}
%      \item The minimal automaton reading Fibonacci representations and recognizing multiples of $k$ has $2k^2$ states~\cite{CRRW2011}.
%      \item The map $n\mapsto \lfloor n\varphi \rfloor$ is computed by a synchronized Fibonacci DFA of $7$ states (see~\cite[Thm.~10.11.1]{Walnut2}).
%  \end{itemize}
%  This is enough.

%  \pierre{exponential blow up problem}
% \end{proof}

For instance, when $k=3$, we can compute an 11-state DFA accepting, in Fibonacci representations, exactly those $n$ for which there is an abelian cube of order $n$ in $\infw{f}$.

As Arnoux-Rauzy and episturmian sequences generalize Sturmian sequences, it is quite natural to try to understand the orders of abelian and additive powers in these sequences.
An archetypical example is the Tribonacci word $\infw{tr}$ (recall~\cref{sec: Tribonacci}). 
We obtain the following results on squares and cubes using \texttt{Walnut} and the fact that the frequency of each letter $0,1,2$ in $\infw{tr}$ is Tribonacci-synchronized (see~\cite[\S~10.12]{Walnut2} and/or~\cref{sec: Tribonacci}). 

\begin{theorem}[{\cite[Thm.~10.13.5]{Walnut2}}]    
\label{thm:abelian square in Tribonacci}
Let $\infw{tr}$ be the Tribonacci word, i.e., the fixed point of the morphism $0\mapsto 01$, $1\mapsto 02$.
    There are abelian squares of all orders in $\infw{tr}$.
Furthermore, if we consider two abelian squares $xx'$ and $yy'$ to be equivalent if $x \sim_{\ab} y$, then every order has either one or two abelian squares.  Both possibilities occur infinitely often.
    \end{theorem}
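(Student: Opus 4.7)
The plan is to extend the Walnut-based approach developed for \cref{thm:tribo}. Using the Tribonacci synchronized functions $\texttt{tribFac0}$, $\texttt{tribFac1}$, $\texttt{tribFac2}$ already defined there, which return the number of letters $0,1,2$ in the factor $\infw{tr}[i..i+n-1]$, I would first introduce the abelian-equivalence predicate
\begin{verbatim}
def tribAbFacEq "?msd_trib Ep,q,r,s $tribFac0(i,n,p) & $tribFac1(i,n,q)
    & $tribFac0(j,n,r) & $tribFac1(j,n,s) & p=r & q=s":
\end{verbatim}
(noting that equality of the $0$- and $1$-counts forces equality of the $2$-count since the factors have the same length). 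Then I would define the abelian-square predicate $\texttt{AbSq}(i,n) := \texttt{tribAbFacEq}(i,i+n,n)$ expressing that $\infw{tr}[i..i+2n-1]$ is an abelian square of order $n$.

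For the first assertion, it suffices to evaluate the Walnut query \verb|?msd_trib An (n>=1) => Ei $AbSq(i,n)| and check that it returns \verb|TRUE|. For the counting statement, I would mark, for each order $n$, the \emph{first} occurrence of each abelian class realized as the left half of an abelian square of order $n$; formally,
\begin{verbatim}
def firstAbSq "?msd_trib $AbSq(i,n)
    & Aj (j<i & $AbSq(j,n)) => ~$tribAbFacEq(i,j,n)":
\end{verbatim}
Counting the $i$ satisfying $\texttt{firstAbSq}(i,n)$ for each $n$ yields a linear representation, obtained via \verb|eval ... n ...| in Walnut. After minimizing this representation and running the semigroup trick of~\cite[\S~4.11]{Walnut2}, I would verify that the algorithm halts and produces a DFAO whose set of output values is exactly $\{1,2\}$, thereby proving the bound. (That the algorithm terminates is guaranteed in advance by Theorem~10.13.5 of~\cite{Walnut2}.)

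To finish, the infinite recurrence of each value is checked by the standard ``eventually/infinitely often'' idiom:
\begin{verbatim}
eval inf1 "?msd_trib An Em (m>n) & AbSqCount[m]=@1":
eval inf2 "?msd_trib An Em (m>n) & AbSqCount[m]=@2":
\end{verbatim}
where \verb|AbSqCount| is the DFAO produced in the previous step; both queries should return \verb|TRUE|.

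The main obstacle is a purely practical one: the intermediate linear representations obtained before minimization tend to be large (as already seen in the proof of \cref{thm:tribo}, where Walnut initially produced representations of size $184$), and one must trust that the semigroup trick terminates within reasonable time and memory. There is no additional theoretical difficulty beyond properly formulating the ``up to abelian equivalence of left halves'' counting as a first-order predicate whose number of witnesses per $n$ is recognizable by the Tribonacci automaton, which is already ensured by the synchronization of the Parikh vectors of prefixes of~$\infw{tr}$.
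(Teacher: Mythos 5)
Your proposal is correct and follows essentially the same route as the source: the paper itself gives no proof of this statement but imports it from \cite[Thm.~10.13.5]{Walnut2}, where the argument is exactly the one you outline --- synchronized Parikh-vector functions for Tribonacci factors, a first-order abelian-equivalence predicate, a ``novel class representative'' predicate whose witnesses are counted via a linear representation, the semigroup trick to get a DFAO with outputs in $\{1,2\}$, and the standard infinitely-often queries. This also matches the template the paper uses for its own Walnut proofs (\cref{thm:tribo,thm: additive cubes in Tribonacci}), so no gap to report.
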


\begin{theorem}   
\label{thm: additive cubes in Tribonacci}
Let $\infw{tr}$ be the Tribonacci word, i.e., the fixed point of the morphism $0\mapsto 01$, $1\mapsto 02$.
    There is a (minimal) Tribonacci automaton of $1169$ (resp., $4927$) states recognizing the Tribonacci representation of those $n$ for which there is an abelian (resp., additive) cube of order $n$ in $\infw{tr}$.
\end{theorem}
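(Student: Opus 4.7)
The plan is to follow the logical/\verb|Walnut| approach already exhibited in the proofs of~\cref{thm:tribo} and~\cref{pro:Fici's criterion}, extending it from single factors to triples of consecutive factors. Concretely, I will express the property ``$n$ is the order of an abelian (resp.\ additive) cube in $\infw{tr}$'' as a first-order formula with free variable $n$ over the Tribonacci numeration system, and then let \verb|Walnut| construct, determinize, and minimize the corresponding DFA, from which the two state counts can be read off.

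The first ingredient is the Tribonacci-synchronized predicates \verb|tribFac0|, \verb|tribFac1|, \verb|tribFac2| already built in the proof of~\cref{thm:tribo}: given $i$ and $n$ in Tribonacci representation, they return the values $|\infw{tr}[i..i+n-1]|_a$ for $a\in\{0,1,2\}$. The abelian-cube predicate then says that there exist a position $i\geq 0$ and non-negative integers $p_a,q_a,r_a$ (for $a\in\{0,1\}$) which are the $a$-counts of the three consecutive length-$n$ blocks $\infw{tr}[i..i+n-1]$, $\infw{tr}[i+n..i+2n-1]$, $\infw{tr}[i+2n..i+3n-1]$, subject to $p_a=q_a=r_a$ for each $a\in\{0,1\}$; equality for $a=2$ is automatic from the equality of the three lengths. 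For the additive case, the same existential formula is used, but the two systems of equalities are replaced by the single linear condition that the weighted sums $|\cdot|_1+2|\cdot|_2$ of the three blocks coincide, which is one affine equation in the six count variables.

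Feeding these two predicates to \verb|Walnut| as \verb|def|/\verb|eval| commands with $n$ as the only free variable, and then invoking the built-in minimization routine, yields two DFAs over the Tribonacci alphabet whose accepted languages are exactly the sets of Tribonacci representations of integers $n$ such that $\infw{tr}$ contains an abelian (resp.\ additive) cube of order $n$. Their state counts, $1169$ and $4927$, are then read off directly, and minimality is guaranteed by \verb|Walnut|'s output.

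The main obstacle I expect is computational rather than conceptual: the six existential quantifiers force \verb|Walnut| to project away numeric variables one by one, and the intermediate non-deterministic automata, in particular in the additive case (whose equivalence is strictly coarser than the abelian one), can become very large before minimization. The noticeable gap between $1169$ and $4927$ states already reflects this asymmetry, as the additive predicate induces a much bigger transition monoid than the abelian one. As in~\cref{thm:tribo}, the only delicate step is ordering and grouping the quantifier eliminations so that \verb|Walnut|'s intermediate products remain tractable in memory; once the computation completes, nothing further has to be verified.
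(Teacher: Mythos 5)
Your proposal is correct and follows essentially the same route as the paper: the paper's proof defines \verb|tribAddCube| as $\exists i\, \texttt{tribAddFacEq}(i,i+n,n) \wedge \texttt{tribAddFacEq}(i,i+2n,n)$, reusing the synchronized factor-counting predicates from \cref{thm:tribo}, and cites \cite[p.~295]{Walnut2} for the abelian case, exactly matching your plan of expressing cube orders in first-order logic over the Tribonacci numeration system and letting \verb|Walnut| produce and minimize the automata. The only cosmetic difference is that the paper packages the additive-equivalence test inside the already-defined predicate \verb|tribAddFacEq| rather than re-quantifying over the six count variables explicitly.
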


\begin{proof}
For the part about abelian cubes, see~\cite[p.~295]{Walnut2}. See also the respective sequences~\cite[A345717,A347752]{Sloane} in the OEIS. For the additive cubes, we can determine the orders of additive cubes in $\infw{tr}$ with the following function:
\begin{verbatim} def tribAddCube "?msd_trib Ei $tribAddFacEq(i,i+n,n)
    & $tribAddFacEq(i,i+2*n,n)":
\end{verbatim}
where \texttt{tribAddFacEq} is the function defined in the proof of \Cref{thm:tribo}. This leads to a Tribonacci automaton of 4927 states. 
\end{proof}

We also note that~\cref{thm:tribo,thm:bounded-additive-complexity-kpower} imply the existence of additive $k$-powers in $\infw{tr}$ for all $k\ge 1$.
When $k=2$, additive squares exist for all orders by~\cref{thm:abelian square in Tribonacci}.
For $k=3$, orders of additive cubes are given in~\cref{thm: additive cubes in Tribonacci} by a large automaton, and no simple description seems to be possible. When $k=4$, the same procedure on \verb|Walnut| requires a much larger memory, and it appears that a simple desk computer cannot achieve it.
We naturally wonder about larger powers and leave the following as a relatively difficult open question.

\begin{problem}
 Characterize the orders of additive $k$-powers in the Tribonacci word $\infw{tr}$.
\end{problem}

It is shown in~\cite{CFZ-2000} that the behavior of the abelian complexity of Arnoux-Rauzy words might be erratic.
In particular, there exist such words with unbounded abelian complexity.
We leave open the research direction of studying the additive complexity of such words and episturmian sequences.
For instance, is there a result similar to~\cref{pro:Fici's criterion} in the framework of additive powers?

%%
%% Appendix
%%

\appendix 

\section{Semigroup trick} \label{Appendix:semigroup trick}

In this section, we give more details about the semigroup trick algorithm discussed in \cref{rk: constructive proof vs semi-group trick halts}, as well as a simple example. For more details, we refer to~\cite[\S~4.11]{Walnut2}. 

Suppose we are given a linear representation of a regular sequence $\infw{x}$ in a positional numeration $U$, i.e., there exist a column vector $\lambda$, a row vector $\gamma$ and a matrix-valued morphism $\mu$ such that $\infw{x}(n)=\lambda \mu(\rep_U(n)) \gamma$. If we suspect that $\infw{x}$ takes on only finitely many values, i.e., $\infw{x}$ is automatic, one can apply the \texttt{Semigroup Trick} algorithm presented in~\cite[\S~4.11]{Walnut2}. This algorithm explores the tree of possibilities for the vectors $\lambda \mu(x)$ for $x\in \Sigma^{*}$ using breadth-first search until no new vector is generated. If the search halts, then the semigroup $\{\lambda \mu(x): x\in \Sigma^*\}$ is finite. Furthermore, the algorithm constructs a DFAO computing $\infw{x}$ by letting the states be the set of distinct vectors that are reachable, the initial state be $\lambda$, and the output function associated with each vector $w$ be $w\gamma$.

\begin{example}
Consider the following linear representation \begin{align*}
\lambda=\begin{pmatrix}
1 & 0 & 0 & 0\\
\end{pmatrix}, \; \mu(0)=\begin{pmatrix}
1 & 0 & 0 & 0\\
1 & 0 & 0 & 0\\
1 & 0 & 0 & 0\\
0 & 0 & 0 & 1\\
\end{pmatrix}, \; \mu(1)=\begin{pmatrix}
0 & 0 & 1 & 0\\
0 & 0 & 1 & 0\\
0 & 0 & -1 & 1\\
0 & 0 & 0 & 1\\
\end{pmatrix}, \; \gamma=\begin{pmatrix}
0 \\ 0 \\ 0 \\ 1
\end{pmatrix}. 
\end{align*} The first values of the sequence are $0,0,0,1,0,0,1,0,0,0,0,1,1,1,0,1,\ldots$. The steps of the semigroup trick algorithm are the following: \begin{enumerate}
\item We start with the vector $\lambda$ and we compute $\lambda \mu(0)$ and $\lambda \mu(1)$. We have $\lambda \mu(0)=\lambda$ and $\lambda \mu(1)=\begin{pmatrix}
0 \; 0 \; 1 \; 0\\
\end{pmatrix}=w_1$, which is a new state. Therefore we add the transitions $\lambda \xrightarrow{0}\lambda$ and $\lambda \xrightarrow{1} w_1$ in the automaton. We add $w_1$ to the queue.

\item We compute $w_1 \mu(0)$ and $w_1 \mu(1)$. We have $w_1 \mu(0)=\lambda$ and $w_1 \mu(1)=\begin{pmatrix}
0 \; 0 \; -1 \; 0\\
\end{pmatrix}=w_2$. Therefore we add the transitions $w_1 \xrightarrow{0}\lambda$ and $w_1 \xrightarrow{1} w_2$ in the automaton. We add $w_2$ to the queue. 

\item We compute $w_2 \mu(0)$ and $w_2 \mu(1)$. We have $w_2 \mu(0)=\begin{pmatrix}
-1 \; 0 \; 0 \; 0\\
\end{pmatrix}=w_3$ and $w_2 \mu(1)=w_1$. Therefore we add the transitions $w_2 \xrightarrow{0}w_3$ and $w_2 \xrightarrow{1} w_1$ in the automaton. We add $w_3$ to the queue. 

\item We compute $w_3 \mu(0)$ and $w_3 \mu(1)$. We have $w_3 \mu(0)=w_3$ and $w_3 \mu(1)=w_2$. Therefore we add the transitions $w_3 \xrightarrow{0}w_3$ and $w_3 \xrightarrow{1} w_2$ in the automaton. Since there is no new state, the algorithm halts. 

\item For each state $w$, we compute the value $w \gamma$. We have $\lambda \gamma=0$, $w_1 \gamma=0$, $w_2 \gamma=1$ and $w_3 \gamma=1$, which are the outputs in the DFAO.
\end{enumerate}
Finally, the obtained DFAO for the sequence $\infw{x}$ is given in~\cref{fig:Fibonacci-DFAO}.

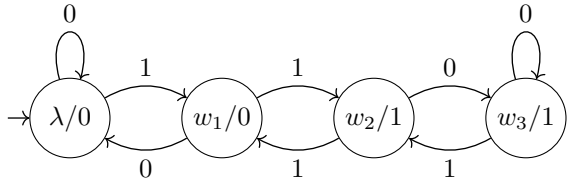
\begin{figure}[ht]
\begin{center}
\begin{tikzpicture}
\tikzstyle{every node}=[shape=circle,fill=none,draw=black,minimum size=30pt,inner sep=2pt]
\node(a) at (0,0) {$\lambda/0$};
\node(b) at (2,0) {$w_1/0$};
\node(c) at (4,0) {$w_2/1$};
\node(d) at (6,0) {$w_3/1$};

\tikzstyle{every node}=[shape=circle,fill=none,draw=none,minimum size=10pt,inner sep=2pt]
\node(a0) at (-1,0) {};

\tikzstyle{every path}=[color =black, line width = 0.5 pt]
\tikzstyle{every node}=[shape=circle,minimum size=5pt,inner sep=2pt]
\draw [->] (a0) to [] node [] {}  (a);

\draw [->] (a) to [loop above] node [] {$0$}  (a);
\draw [->] (a) to [bend left] node [above] {$1$}  (b);

\draw [->] (b) to [bend left] node [below] {$0$}  (a);
\draw [->] (b) to [bend left] node [above] {$1$}  (c);

\draw [->] (c) to [bend left] node [below] {$1$}  (b);
\draw [->] (c) to [bend left] node [above] {$0$}  (d);

\draw [->] (d) to [loop above] node [] {$0$}  (d);
\draw [->] (d) to [bend left] node [below] {$1$}  (c);

;
\end{tikzpicture}
\caption{An example of DFAO obtained with the semigroup trick algorithm.}
\label{fig:Fibonacci-DFAO}
\end{center}
\end{figure}

Now we recognize the automaton of the famous Rudin--Shapiro sequence. Notice that from a minimal linear representation, the semigroup trick algorithm halts if and only if the sequence is bounded. Since \texttt{Walnut} does not provide a minimal linear representation, it may be not sufficient to use the semigroup trick algorithm without minimization. However, for the examples given in this paper, the semigroup trick algorithm halts even before minimization but the obtained automata are not minimal. 
\end{example}

%%
%% Acknolwedgments
%%

\section*{Acknolwedgments}
Pierre Popoli is supported by ULiège's Special Funds for Research, IPD-STEMA Program. Jeffrey Shallit is supported by NSERC grants 2018-04118 and 2024-03725. Manon Stipulanti is an FNRS Research Associate supported by the Research grant 1.C.104.24F

%We thank the reviewers for providing valuable remarks leading to a better version of the paper.
We thank Matthieu Rosenfeld and Markus Whiteland for helpful discussions, and Eric Rowland for implementing useful Mathematica code.

%%
%% Bibliography
%%

%% Please use bibtex, 
\bibliographystyle{plainurl}
\bibliography{biblio.bib}

\end{document}